\documentclass[a4paper,12pt]{amsart}
\usepackage{amsmath}
\usepackage{amssymb}
\usepackage{mathrsfs}
\usepackage{cite}
\usepackage{enumerate}
\usepackage{ifthen}
\usepackage{graphicx}
\usepackage{hyperref}
\hypersetup{
    colorlinks=true,
    citecolor=blue,
    linkcolor=blue,
    urlcolor=blue
}
\usepackage[T1]{fontenc} %skandit
\usepackage{tabularx}
\usepackage{multirow}
\usepackage{color,soul}
\usepackage{tikz}
\usepackage{pgfplots} % Optional, for more advanced plotting
\usepackage{amsthm}
\newtheorem{pro}{Proposition}

\nonstopmode \numberwithin{equation}{section}
\newtheorem{thm}{Theorem}[section]

\newtheorem{lem}{Lemma}[section]

\theoremstyle{definition}

\newtheorem{rem}{Remark}[section]

\newtheorem{innercustomthm}{Theorem}
\newenvironment{customthm}[1]
  {\renewcommand\theinnercustomthm{#1}\innercustomthm}
  {\endinnercustomthm}
\newcounter{minutes}
\divide\time by 60
\newcounter{hours}
\multiply\time by 60
\addtocounter{minutes}{-\time}
\newcounter {own}
\def\theown {\thesection       .\arabic{own}}

\newenvironment{pf}[1][]{%
 \vskip 3mm
 \noindent
 \ifthenelse{\equal{#1}{}}%
  {{\slshape Proof. }}%
  {{\slshape #1.} }%
 }%
{\qed\bigskip}

\begin{document}
\title{ On the logarithmic coefficients of Ma-Minda type convex functions}

\author{Md Firoz Ali}
\address{Md Firoz Ali,
 National Institute Of Technology Durgapur, West Bengal, India}
\email{ali.firoz89@gmail.com, fali.maths@nitdgp.ac.in}

\author{Lokenath Thakur}
\address{Lokenath Thakur, National Institute Of Technology Durgapur, West Bengal, India}
\email{lokenaththakur1729@gmail.com}

\subjclass[2010]{Primary 30C45, 30C55}

\keywords{univalent functions, starlike functions, convex functions, logarithmic coefficients, sub
ordination}

\def\thefootnote{}
\footnotetext{ {\tiny File:~\jobname.tex,
printed: \number\year-\number\month-\number\day,
          \thehours.\ifnum\theminutes<10{0}\fi\theminutes }
} \makeatletter\def\thefootnote{\@arabic\c@footnote}\makeatother

\begin{abstract}
In this paper, we investigate three specific subclasses of Ma-Minda type convex functions: namely, convex functions of order $\alpha$, Janowski convex functions, and Robertson functions of normalized analytic functions defined in the open unit disk.
For these classes, we establish logarithmic coefficient inequalities concerning both individual coefficient estimates and weighted series. The results presented here correct some earlier erroneous results and extend several previously known ones.
\end{abstract}
\thanks{}

\maketitle
\pagestyle{myheadings}
\markboth{Md Firoz Ali and Lokenath Thakur }{ A study of logarithmic coefficients for certain Ma-Minda type functions}

\section{Introduction}
Let $\mathcal{H}$ denote the class of analytic functions defined in the open unit disk $\mathbb{D} = \{z \in \mathbb{C} : |z| < 1\}$. We define $\mathcal{A}$ as the subclass of $\mathcal{H}$ consisting of functions normalized by the power series expansion
\begin{equation}\label{T-05}
f(z) = z + \sum_{n=2}^\infty a_n z^n,
\end{equation}
and let $\mathcal{S} \subset \mathcal{A}$ be the class of functions that are univalent (one-to-one) in $\mathbb{D}$.
The study of coefficient bounds for $\mathcal{S}$ is a cornerstone of geometric function theory. In $1916$, Bieberbach \cite{1916-Bieberbach} demonstrated that for any $f \in \mathcal{S}$, the second coefficient satisfies $|a_2| \le 2$, with equality occurring if and only if $f$ is a rotation of the Koebe function $k(z) = z/(1-z)^2$. This observation led to the famous Bieberbach conjecture $|a_n| \le n$ for all $n \ge 3$, with the Koebe function again serving as the extremal case.\\

Central to the eventual resolution of the Bieberbach conjecture are the logarithmic coefficients $\gamma_n$ of $f \in \mathcal{S}$, defined by
\begin{equation}\label{T-71}
F_f(z)=\log\frac{f(z)}{z}=2\sum\limits_{n=1}^{\infty}\gamma_nz^n.
\end{equation}
The importance of these coefficients was first recognized by Bazilevi\'{c} \cite{1965-Bazilevich}. In subsequent work, Bazilevi\'{c} \cite{1967-Bazilevich} studied the Dirichlet-type series $\sum_{n=1}^{\infty} n|\gamma_n|^2 r^{2n}$, which relates to the area of the image of the disk $|z| < r<1$ under the mapping 
$\frac{1}{2}F_f(z)$ for $f\in\mathcal{S}$. A pivotal result in this domain is the de Branges inequality (resolving the earlier Milin conjecture) which asserts that if $f \in \mathcal{S}$ and the coefficients $\gamma_n$ are defined as in \eqref{T-71}, then 
$$
\sum_{k=1}^n (n-k+1)|\gamma_n|^2\le\sum_{k=1}^n \frac{n-k+1}{k}, \quad n=1,2,\cdots,
$$
with equality holding precisely for Koebe function or one of its rotations (see \cite{1985-Branges}). This inequality was instrumental in de Branges's proof of the long-standing Bieberbach conjecture. Furthermore, it inspired various subsequent inequalities, such as the well-known bound (see \cite{1979-Duren})
$$
\sum_{k=1}^\infty|\gamma_n|^2\le\sum_{k=1}^\infty \frac{1}{k^2}=\frac{\pi^2}{6}.
$$
Along similar lines, Roth \cite{Roth-2007} established a sharp inequality for functions $f \in \mathcal{S}$:
\begin{equation}\label{T-08}
\sum_{n=1}^{\infty} \lambda_n |\gamma_n|^2 \le \sum_{n=1}^{\infty} \frac{\lambda_n}{n^2}, \quad \lambda_n= \left(\frac{n}{n+1}\right)^2.
\end{equation}
This inequality has become a source of many new inequalities for the logarithmic coefficients of univalent functions (see \cite{Ponnusamy-Sharma- Wirths-2020}). Roth's inequality \eqref{T-08} was subsequently extended by Ponnusamy and Sugawa \cite{ponnusami-sugawa-2021}, who established a family of sharp inequalities for logarithmic coefficients using convex sequences as weights.\\

While the average behavior of logarithmic coefficients $\gamma_n$ has been studied extensively, (see \cite{Ali-Vasudevarao-2018, 1979-Duren}), precise upper bounds on $|\gamma_n|$ for functions in the class $\mathcal{S}$ remain scarce. The Koebe function $k(z) = z/(1-z)^2$, whose logarithmic coefficients are given by $\gamma_n = 1/n$, typically serves as the extremal function for many problems in $\mathcal{S}$. This might lead to the conjecture that $|\gamma_n| \leq 1/n$ holds generally for functions in $\mathcal{S}$. However, this conjecture fails dramatically, not only in precise values but also in terms of asymptotic behavior.
%In fact, \cite[Theorem 8.4]{1983-Duren} demonstrates the existence of a bounded function $f \in \mathcal{S}$ whose logarithmic coefficients satisfy $\gamma_n \neq O(n^{-0.83})$, showing that the Koebe function's coefficients do not universally dominate.
However, one of the reasons the logarithmic coefficients have received more attention is because, although the sharp bound  
\begin{align*}
|\gamma_1|\le 1\quad \mathrm{and}\quad |\gamma_2|\le \frac{1}{2}\left(1+2 e^{-2}\right)
\end{align*}
for the class $\mathcal{S}$ is known, the problem of determining the sharp bounds for $|\gamma_n|$ when $n \ge 3$ remains an open challenge.\\

For $f, g \in \mathcal{H}$, we say $f$ is subordinate to $g$, denoted $f \prec g$, if there exists a function $\omega: \mathbb{D} \to \mathbb{D}$ with $\omega(0) = 0$ such that $f(z) = g(\omega(z))$ for all $z \in \mathbb{D}$. If $g$ is univalent, then $f \prec g$ if and only if $f(0) = g(0)$ and $f(\mathbb{D}) \subset g(\mathbb{D})$.
For \( \varphi \in \mathcal{H} \) with $\varphi(0)=1$, let the classes \( \mathcal{S}^*(\varphi) \) and $\mathcal{C}(\varphi)$ be defined by
$$
\mathcal{S}^*(\varphi) = \left\{ f \in \mathcal{A} : \frac{zf'(z)}{f(z)} \prec \varphi(z) \right\},~
\mathrm{and}~~
\mathcal{C}(\varphi) = \left\{ f \in \mathcal{A} : 1 + \frac{zf''(z)}{f'(z)} \prec \varphi(z) \right\},
$$
respectively. The classes $\mathcal{S}^*(\varphi)$ and $\mathcal{C}(\varphi)$ were introduced by Ma and Minda \cite{ma-minda-1992} under the additional hypotheses that $ \varphi $ is univalent with a positive real part in $ \mathbb{D} $, and that $ \varphi(\mathbb{D}) $ is symmetric with respect to the real axis and starlike with respect to $ \varphi(0) = 1 $, with $ \varphi'(0) > 0 $. The classes $ \mathcal{S}^*(\varphi) $ and $\mathcal{C}(\varphi)$ are called the Ma-Minda classes of starlike and convex functions, respectively. It is important to note that $ f \in \mathcal{C}(\varphi) $ if and only if $zf'(z) \in \mathcal{S}^*(\varphi)$.\\

For different choices of $\varphi$, the classes $ \mathcal{S}^*(\varphi) $ and $\mathcal{C}(\varphi)$ reduce to several well-known classes which were earlier introduced and studied for their geometric and analytic properties. For instance, if $\varphi(z)=(1+z)/(1-z)$, then $\mathcal{S}^*(\varphi) =: \mathcal{S}^*$ and $\mathcal{C}(\varphi) =: \mathcal{C}$, representing the usual classes of starlike and convex functions, respectively.
If $\varphi(z)=(1+(1-2\alpha)z)/(1-z)$ with $0 \le \alpha < 1$, then $\mathcal{S}^*(\varphi) =: \mathcal{S}^*(\alpha)$ and $\mathcal{C}(\varphi) =: \mathcal{C}(\alpha)$, which are the well-known classes of starlike and convex functions of order $\alpha$, introduced in \cite{Robertson-1935}. Similarly, if $\varphi(z)=(1+Az)/(1+Bz)$ with $-1 \le B < A \le 1$, then $\mathcal{S}^*(\varphi) =: \mathcal{S}^*(A,B)$ and $\mathcal{C}(\varphi) =: \mathcal{C}(A,B)$, denoting the classes of Janowski starlike and Janowski convex functions introduced in \cite{Janowski1,Janowski2}.
If $\varphi=(1+(c-1)z)/(1-z)$ with $0<c\le 3$, then $\mathcal{C}(\varphi)$ reduces to the class $\mathcal{F}(c)$ (see \cite{Ponnusamy-Sharma- Wirths-2020}). For $c\in(0,2]$, if we take $\alpha = 1 - c/2$, the family \(\mathcal{F}(c)\) coincides with the class $\mathcal{C}(\alpha)$ of convex functions of order \(\alpha\). For \(c = 3\), the class \(\mathcal{F}(3)\) has received considerable attention in recent years (see \cite{Ponnusamy-Sahoo-Yanagihara-2014}). Functions in \(\mathcal{F}(3)\), are known to be convex in one direction, and are hence univalent and close-to-convex, but they are not necessarily starlike in \(\mathbb{D}\) (see \cite{Umezawa-1952}).
Furthermore, if $\varphi=(1+e^{-2i\alpha }z)/(1-z)$ with $|\alpha|<\pi/2$, then $\mathcal{C}(\varphi)$ reduces to the Robertson class $\mathcal{S}_\alpha$, introduced by Robertson \cite{Robertson-1969}. Unlike convex or starlike functions, members of \(\mathcal{S}_{\alpha}\) are not necessarily univalent. Robertson \cite{Robertson-1969} himself noted that univalence in \(\mathcal{S}_{\alpha}\) is not guaranteed for all \(\alpha\). Indeed, it is now well known that functions in \(\mathcal{S}_{\alpha}\) are univalent if $0 < \cos \alpha \leq \frac{1}{2}$.\\

%\begin{itemize}
%\item $\mathcal{S}^*\left(\frac{1+z}{1-z}\right) = \mathcal{S}^*$ (starlike functions), $\mathcal{C}\left(\frac{1+z}{1-z}\right) = \mathcal{C}$ (convex functions).
%\vspace{2 mm}
%\item $\mathcal{S}^*\left(\frac{1+Az}{1+Bz}\right) = \mathcal{S}^*(A,B)$,     $\mathcal{C}\left(\frac{1+Az}{1+Bz}\right) = \mathcal{C}(A,B)$, for $-1 \leq B < A \leq 1$ (Janowski classes).
%\vspace{2 mm}
%\item $\mathcal{C}\left(1 + \frac{cz}{1-z}\right) = \mathcal{F}(c)$, for $c \in (0,3]$.
%\vspace{2 mm}
%\item $\mathcal{C}(z + \sqrt{1+z^2}) = \left\{ f \in \mathcal{A} : 1 + \frac{zf''(z)}{f'(z)} \prec z + \sqrt{1+z^2} \right\}$.
%\item $\mathcal{S}_\alpha = \left\{ f \in \mathcal{A} : 1 + \frac{zf''(z)}{f'(z)} \prec \frac{1+e^{-2i\alpha}z}{1-z} \right\}$, for $-\pi/2 < \alpha < \pi/2$ (Robertson class).\\
%\end{itemize}

In this article, we investigate inequalities concerning the logarithmic coefficients of functions belonging to the classes $\mathcal{F}(c)$, $\mathcal{C}(A,B)$, and $\mathcal{S}_\alpha$. Our primary objective is to establish sharp upper bounds for these coefficients, thereby extending several results currently existing in the literature. Furthermore, we identify certain inaccuracies in previous studies and provide corrected versions of those results. The remainder of this paper is structured as follows. In Section \ref{T-section-001}, we provide the formal statements of our main theorems, detailing the specific coefficient bounds obtained for each class. In Section \ref{T-section-002}, we present the necessary auxiliary lemmas that serve as the foundation for our subsequent analysis. Finally, the proofs of the main theorems are developed in Section \ref{T-section-003}.

\section{Main Results}\label{T-section-001}

\subsection{The class   $\mathcal{F}(c)$}
Recall that a function $f\in\mathcal{A}$ belongs to $\mathcal{F}(c)$, $c\in(0,3]$ if  $f\in\mathcal{C}(\varphi)$ with $\varphi=(1+(c-1)z)/(1-z)$. In other words,  a function $f\in\mathcal{A}$ is in $\mathcal{F}(c)$ if and only if
\[
\operatorname{Re}\left(1 + \frac{zf''(z)}{f'(z)}\right) > 1 - \frac{c}{2}.
\]
%For $c\in(0,2]$, if we take $\alpha = 1 - c/2$, the family \(\mathcal{F}(c)\) coincides with the well-known class $\mathcal{C}(\alpha)$ of convex functions of order \(\alpha\). For \(c = 3\), the class \(\mathcal{F}(3)\) has received considerable attention in recent years (see \cite{Ponnusamy-Sahoo-Yanagihara-2014}). Functions in \(\mathcal{F}(3)\), are known to be convex in one direction, hence univalent and close-to-convex, but not necessarily starlike in \(\mathbb{D}\) (see \cite{Umezawa-1952}).
For a wide range of extremal problems within the class $\mathcal{F}(c)$, the function 
\begin{align}\label{T-fun-001}
    f_c(z) = 
\begin{cases}
\displaystyle \frac{(1 - z)^{1-c} - 1}{c - 1}, &~~~c\neq1,\\
\displaystyle -\log(1-z), &~~~c=1.
\end{cases}
\end{align}
often serves as the extremal function. With the help of the function $f_c$, we define a new function $\psi(z)$ as given below:
\begin{align}\label{T-fun-002}
    \psi(z) = \frac{zf'_c(z)}{f_c(z)}=
\begin{cases}
   \displaystyle \frac{(c - 1)z}{1 - z} \cdot \frac{1}{1 - (1 - z)^{c-1}}, & c\neq1,\\[2mm]
   \displaystyle \frac{-z}{(1-z)\log(1-z)}, & c=1.
\end{cases}
\end{align}
This function will play an important role in the present article.\\

Recently, Cho et al. \cite{cho-Alimohammadi-2021} studied the logarithmic coefficients for functions in the class $\mathcal{F}(c)$ and proved the following theorem.

\begin{customthm}{A}\cite[Theorem 2.1 ]{cho-Alimohammadi-2021}\label{T-thm-A}
For $f\in \mathcal{F}(c)$ with $c \in (0, 0.656] \cup \{2\}$, the logarithmic coefficients of $f$ satisfies the inequality
\begin{equation}\label{eq:main-inequality}
|\gamma_n| \leq \frac{|D_n|}{2n}, \quad n = 1, 2, 3, \dots,
\end{equation}
and
$$
\sum_{n=1}^{\infty} |\gamma_n|^2 \leq \frac{1}{4} \sum_{n=1}^{\infty} \frac{|D_n|^2}{ n^2},
$$
where $D_n$ are the Taylor coefficients of $\psi(z)$ given by \eqref{T-fun-002}. Further, both the inequalities are sharp.
\end{customthm}

The proof of Theorem \ref{T-thm-A} (specially the estimate \eqref{eq:main-inequality}) completely relies on the convexity of $\psi(z)$ defined by \eqref{T-fun-002}. The authors of \cite{cho-Alimohammadi-2021} claimed to verify the convexity of $\psi(z)$ pictorially (see \cite[Fig. 1]{cho-Alimohammadi-2021}) using \textsc{Maple}\textsuperscript{\texttrademark} software by showing that
\begin{equation}\label{eq:convexity-condition}
\operatorname{Re}\Psi(z)= \operatorname{Re} \left( 1 + \frac{z \psi''(z)}{\psi'(z)} \right) > 0 \quad \text{for} ~ z \in \mathbb{D}.
\end{equation}
However, this claim is incorrect. Indeed, first we prove that the function $\psi(z)$ is not convex for $c \in (1/2, 2)$.

\begin{pro}\label{T-prop-001}
The function $\psi(z)$ defined by \eqref{T-fun-002} is not convex when $c \in (1/2, 2)$.
\end{pro}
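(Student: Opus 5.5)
The plan is to exhibit, for each $c\in(1/2,2)$, a point $z_0\in\mathbb{D}$ with $\operatorname{Re}\Psi(z_0)<0$, where $\Psi=1+z\psi''/\psi'$ as in \eqref{eq:convexity-condition}. Since $\psi$ is analytic in $\mathbb{D}$ with $\psi(0)=1$, failure of this inequality at a single point shows that $\psi$ is not convex. I would split into the ranges $1<c<2$ and $1/2<c\le 1$, because the boundary singularity of $\psi$ at $z=1$ behaves differently in the two ranges. Throughout I write $w=1-z$ and use the principal branch of $w^{s}$ on $\operatorname{Re}w>0$.

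\emph{Case $1<c<2$.} Here $w^{c-1}\to0$ as $z\to1$, so
\[
\psi(z)=\frac{(c-1)z}{w}\bigl(1+w^{c-1}+O(w^{2c-2})\bigr).
\]
Differentiating gives
\[
\psi'(z)=\frac{c-1}{w^{2}}\bigl(1+(2-c)w^{c-1}+o(|w|^{c-1})\bigr),
\]
and taking a logarithmic derivative,
\[
\Psi(z)=\frac{1+z}{1-z}-(2-c)(c-1)\,z\,w^{c-2}\bigl(1+o(1)\bigr).
\]
I would control the error terms by writing $\psi$ as an explicit analytic function of the two variables $w$ and $w^{c-1}$. On the arc $z=e^{i\theta}$ with $\theta\to0^{+}$, the first term has real part exactly $0$, and $\arg w\to-\pi/2$. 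Hence
\[
\operatorname{Re}\,w^{c-2}\sim |w|^{c-2}\cos\bigl((2-c)\pi/2\bigr)>0,
\]
and this tends to $+\infty$. Therefore $\operatorname{Re}\Psi(e^{i\theta})\to-\infty$ as $\theta\to0^{+}$. Since $\Psi$ extends continuously to $\overline{\mathbb{D}}\setminus\{1\}$ near this arc, we get $\operatorname{Re}\Psi(re^{i\theta})<0$ for $r$ close to $1$.

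\emph{Case $1/2<c\le1$.} Now $(1-z)^{c-1}$ blows up (or $\log$ does when $c=1$), and $\psi\approx(1-c)w^{-c}$. The same computation gives $\operatorname{Re}\Psi\to(1-c)/2\ge0$ on the circle near $z=1$, so the argument near $z=1$ does not decide the sign. For this range I would look elsewhere on $\partial\mathbb{D}$, where $\psi$ is smooth. The approach is to treat $\operatorname{Re}\Psi(e^{i\theta})$ as an explicit real function of $(\theta,c)$, using the closed forms of $\psi'$ and $\psi''$ obtained from \eqref{T-fun-002}. I would then locate, for instance near $\theta=\pi/2$ or near $\theta=\pi$ where $|1-z|$ is as large as possible, an angle at which it is negative. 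A natural first candidate is to expand $\operatorname{Re}\Psi(e^{i\theta})$ in $\theta$ about $\pi$: the real-axis symmetry of $\psi$ makes the first derivative vanish there, and I would study the sign of the value and of the second derivative as functions of $c$.

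The main obstacle is this second range. There is no dominant singular term to read off, and the negativity must be certified by a genuine estimate rather than by a picture. It is also not obvious that the point $\theta=\pi$ works uniformly for $c\in(1/2,1]$. If it does not, I would use a $c$-dependent angle $\theta(c)$, and prove the sign by bounding the explicit trigonometric expressions for $|1-e^{i\theta}|^{c-1}$ and $\arg(1-e^{i\theta})=(\theta-\pi)/2$.
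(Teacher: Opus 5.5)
\textbf{Gap in the range $1/2<c\le 1$.} Your case $1<c<2$ is sound. The expansion $\Psi=\frac{1+z}{1-z}-(2-c)(c-1)z(1-z)^{c-2}(1+o(1))$ is correct and forces $\operatorname{Re}\Psi(e^{i\theta})\to-\infty$ as $\theta\to0^{+}$. The genuine gap is the range $1/2<c\le 1$, which you leave unproved. Your reason for abandoning $z=1$ there is a miscalculation: you kept only the leading term $1+(1+c)z/(1-z)$, whose real part on $|z|=1$ is $(1-c)/2$, but the next term is unbounded.

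With $w=1-z$ and $s=1-c$, one has $\psi=s(1-w)w^{-c}\bigl(1+w^{s}+O(w^{2s})\bigr)$. A convenient tool is the identity $\Psi=\varphi-2\psi+z\varphi'/(\varphi-\psi)$, where $\varphi=1+cz/(1-z)$; it follows by differentiating $z\psi'=\psi(\varphi-\psi)$. It gives
\[
\Psi(z)=1+\frac{(1+c)z}{1-z}+\frac{(1-c)(1-2c)}{c}\,(1-z)^{-c}+O\bigl(|1-z|^{1-2c}\bigr)+O(1),\qquad \tfrac12<c<1 .
\]
On the arc, $\arg(1-e^{i\theta})\to-\pi/2$. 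So the real part of the $(1-z)^{-c}$ term behaves like $\frac{(1-c)(1-2c)}{c}\cos(c\pi/2)\,|1-z|^{-c}$, which tends to $-\infty$ because $1-2c<0$.

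For $c=1$ the same identity gives exactly $\Psi=\frac{1+z}{1-z}+\frac{z}{1-z}\bigl(\frac{2}{L}-\frac{z}{L+z}\bigr)$ with $L=\log(1-z)$. Its real part on the arc is asymptotic to $-\pi/\bigl(2|1-z|\log^{2}|1-z|\bigr)$, which also tends to $-\infty$.

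So your near-$z=1$ argument in fact covers all of $(1/2,2)$, and the threshold $c=1/2$ is exactly the sign change of $1-2c$. By contrast, your proposed fallback fails at the point you suggest. $\Psi(-1)$ is real and positive: about $0.187$ for $c=1$ and about $0.239$ for $c=0.6$.

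\textbf{Comparison with the paper.} The paper's proof uses no asymptotics. It writes $f_c(z)=zF(c,1;2;z)$, so that $\psi=1-c+c\,F(c+1,1;2;z)/F(c,1;2;z)$. It then applies the Sugawa--Wang lemma with parameters $(c,1,2)$, whose hypotheses $c<2<c+\tfrac32$ and $(2-c)\cdot 1>0$ reduce exactly to $1/2<c<2$. Once repaired, your route is elementary and self-contained, and it locates where convexity fails (near $z=1$). The price is separate remainder estimates in the three regimes $c<1$, $c=1$ and $c>1$, whereas the paper gets the whole interval from a single cited criterion.
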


On the other hand, by numerical computation, we demonstrate that $\psi(z)$ defined by \eqref{T-fun-002} fails to be convex for several values of $c$ in $(0, 1/2]$. To show that $\psi(z)$ is not convex for a certain value of $c\in (0, 1/2]$, we show that
$$
\operatorname{Re}\Psi(e^{i\theta})= \operatorname{Re}\left( 1 + \frac{e^{i\theta}\psi''(e^{i\theta})}{\psi'(e^{i\theta})} \right)<0
$$
for some values of $z=e^{i\theta}$, $0\le \theta<2\pi$ on the unit circle $|z|=1$.
In \textbf{Table 1}, we demonstrate that $\psi(z)$ is not convex for selected values of $c \in (0, 1/2]$. Consequently, for the values of $c$ mentioned in Table 1, the function $\psi(z)$ fails to be convex. This suggests that $\psi(z)$ may lack convexity for the entire interval $c \in (0, 1/2]$, thereby implying that the estimate \eqref{eq:main-inequality} is not generally valid. Here it is pertinent to mention that Ponnusamy et al. \cite{ponnusamy-Sharma-Wirths-2018} established sharp estimates for $|\gamma_n|$ when $n=1, 2, 3$ for all $c \in (0,3]$, while obtaining sharp bounds for $|\gamma_4|$ and $|\gamma_5|$ only under the restricted conditions $c \le 144/55$ and $c \le 80/61$, respectively.\\

\begin{table}[h]
\centering
\caption{} 
\begin{tabular}{|c|c|c|}
\hline
$c$ & $\theta ~~ (0\le \theta<2\pi)$ & $\operatorname{Re}\Psi(e^{i\theta})$ \\[2mm] \hline
0.1 & $(2-10^{-20} )\pi$ & $-1.66\times 10^{6} $ \\[2mm] \hline
0.15 & $(2-10^{-31} )\pi$ & $-1.02\times 10^{24} $ \\[2mm] \hline
0.2 & $(2-10^{-28} )\pi$ &$-1.077\times 10^{22} $  \\[2mm] \hline
0.25 & $(2-10^{-13} )\pi$ & $-375.774 $ \\[2mm] \hline
0.3 & $(2-10^{-30} )\pi$ &$-9.57\times 10^{31} $  \\[2mm] \hline
0.35 & $(2-10^{-17} )\pi$ & $-4.65\times 10^{12} $ \\[2mm] \hline
0.4 & $(2-10^{-20} )\pi$ & $-2.05\times 10^{19} $ \\[2mm] \hline
0.45 & $(2-10^{-19} )\pi$ & $-3.41\times 10^{19} $ \\[2mm] \hline
0.4 & $(2-10^{-20} )\pi$ & $-2.05\times 10^{19} $ \\[2mm] \hline
0.5 & $(2-10^{-20} )\pi$ & $-2.18\times 10^{13} $ \\[2mm] \hline
\end{tabular}
\end{table}

Later on, Allu \emph{et al.} \cite{Allu-Sharma-2024} also studied the logarithmic coefficients of functions in $ \mathcal{F}(c)$ extending the results of Cho et al. \cite{cho-Alimohammadi-2021} and proved the following logarithmic coefficient inequalities.

\begin{customthm}{B}\cite{Allu-Sharma-2024}\label{T-thm-B}
Let \( f \in \mathcal{F}(c) \) for \( c \in (0, 0.656] \cup \{2\} \). Then the logarithmic coefficients \(\gamma_n\) of \( f \) satisfy the inequalities
\[
\sum_{n=1}^{\infty} n^2 |\gamma_n|^2 \leq \frac{1}{4} \sum_{n=1}^{\infty} |D_n|^2,
\]
and
\[
\sum_{n=1}^{\infty} (n+1)^t |\gamma_n|^2 \leq \frac{1}{4} \sum_{n=1}^{\infty} \frac{(n+1)^t}{n^2} |D_n|^2 \quad \text{for } t \leq 2,
\]
where $D_n$ are the Taylor's coefficients of $\psi(z)$ given by \eqref{T-fun-002}. The first inequality is sharp.
\end{customthm}

In the proof of Theorem \ref{T-thm-B}, the authors of \cite{Allu-Sharma-2024} used the convexity of the function $\psi(z)$ defined by \eqref{T-fun-002} which compelled them to restrict the result for \( c \in (0, 0.656] \cup \{2\} \). But after a close inspection, we observed that the convexity of $\psi(z)$ is not necessary.
In the next theorem, we extend Theorem \ref{T-thm-B} with $c \in (0,2]$.
%improving upon the previously known result. This broader range enhances the applicability of the theorem to a wider class of functions in $ \mathcal{F}(c) $, thereby strengthening the inequalities for the logarithmic coefficients $ \gamma_n $.
 
\begin{thm}\label{T-thm-001}
Let $ f \in \mathcal{F}(c) $ for $ c \in (0, 2]$. Then the logarithmic coefficients $\gamma_n$ of $ f $ satisfy the inequalities  
$$
\sum_{n=1}^{\infty} n^2 |\gamma_n|^2 \leq \frac{1}{4} \sum_{n=1}^{\infty} |D_n|^2,
$$
and  
$$
\sum_{n=1}^{\infty} (n+1)^t |\gamma_n|^2 \leq \frac{1}{4} \sum_{n=1}^{\infty} \frac{(n+1)^t}{n^2} |D_n|^2 \quad \text{for } t \leq 2,
$$
where the $D_n$ are the Taylor coefficients of $\psi(z)$ given by \eqref{T-fun-002}. Moreover, the first inequality is sharp.
\end{thm}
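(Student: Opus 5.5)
Our plan is to prove a subordination $zf'(z)/f(z)\prec \psi(z)$ and then use only the $\ell^2$ (Rogosinski/Littlewood-type) consequence of subordination, which, unlike the coefficient-wise bound, requires no convexity of $\psi$.

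\emph{Step 1: the subordination.} For $c\in(0,2]$ the class $\mathcal{F}(c)$ coincides with $\mathcal{C}(\alpha)$, $\alpha=1-c/2\in[0,1)$. By the classical Marx--Strohhäcker type result for convex functions of order $\alpha$ (equivalently, the Ma--Minda theory for $\mathcal{C}(\varphi)$), $f\in\mathcal{C}(\varphi)$ implies
\[
\frac{zf'(z)}{f(z)}\prec \frac{zk_\varphi'(z)}{k_\varphi(z)},
\]
where $k_\varphi$ is the extremal function with $1+zk_\varphi''/k_\varphi'=\varphi$. For $\varphi(z)=(1+(c-1)z)/(1-z)$ this extremal function is exactly $f_c$ from \eqref{T-fun-001}, so $zf'/f\prec\psi$. (Ma and Minda prove this under the hypothesis that $\varphi$ is convex, which holds here since $\varphi$ is a Möbius map onto a half-plane; this gives univalence of $\psi$ as well, although univalence is not needed.) This is the step where the earlier restriction to $c\le 0.656$ came from, and it is the main point to check carefully. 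Only the subordination is needed, not the convexity of $\psi$.

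\emph{Step 2: the $\ell^2$ inequality.} Differentiating \eqref{T-71} gives
\[
z\Bigl(\log\frac{f(z)}{z}\Bigr)'=\frac{zf'(z)}{f(z)}-1=\sum_{n\ge1}2n\gamma_n z^n .
\]
Writing $\psi(z)=1+\sum D_nz^n$ and $zf'/f=\psi\circ\omega$, Rogosinski's (Littlewood's) subordination theorem for integral means with $p=2$ gives
\[
\sum_{n\ge1}4n^2|\gamma_n|^2r^{2n}\le\sum_{n\ge1}|D_n|^2r^{2n},\qquad 0<r<1 .
\]
Letting $r\to1^-$ yields the first inequality, with both sides possibly infinite. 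Sharpness follows from $f=f_c$, for which $2n\gamma_n=D_n$.

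\emph{Step 3: the weighted inequality.} For $t\le2$, set $a_n=4n^2|\gamma_n|^2$ and $b_n=|D_n|^2$. The partial-sum form of Rogosinski's theorem gives $\sum_{k=1}^N a_k\le\sum_{k=1}^N b_k$ for every $N$. Indeed, the truncation argument for subordination shows that the Cesàro/partial sums of the squared coefficients are dominated; this is Rogosinski's result for $(\sum_{k\le N}|c_k|^2)$.

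The weights $w_n=(n+1)^t/n^2$ are nonincreasing when $t\le2$, since $(1+1/n)^t/n^{2-t}$ decreases. Abel summation then gives
\[
\sum_{n=1}^N w_na_n=\sum_{n=1}^{N-1}(w_n-w_{n+1})A_n+w_NA_N\le\sum_{n=1}^N w_nb_n,
\]
where $A_n$ and $B_n$ denote partial sums, because $A_n\le B_n$ and all the coefficients $w_n-w_{n+1}$ and $w_N$ are nonnegative. Dividing by 4 and letting $N\to\infty$ gives the second inequality. The point requiring care is invoking Rogosinski's partial-sum version rather than only the $r\to1$ version; if that is unavailable, we would instead write the weights as a mixture $w_n=\int r^{2n}\,d\mu(r)$ with a positive measure, which handles completely monotone weights such as $t\le0$ directly.
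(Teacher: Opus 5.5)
Your argument is correct and follows the paper's route. Both proofs use the Briot--Bouquet (Ma--Minda) subordination $zf'(z)/f(z)\prec\psi(z)$ for $c\in(0,2]$, then the $\ell^2$ subordination estimate the paper quotes as Lemma~\ref{T-24}, which you reprove via Littlewood/Rogosinski and Abel summation rather than citing, with sharpness from $f_c$. One small fix: for $t<0$ the factor $(1+1/n)^t$ is increasing, so for those $t$ show that $w_n=(n+1)^t/n^2$ is nonincreasing by writing it as the product of the decreasing sequences $(n+1)^t$ and $n^{-2}$, keeping your factorization only for $0\le t\le 2$.
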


We note that if we take $\alpha = 1 - c/2$, then the class $\mathcal{F}(c)$ for $c \in (0, 2]$ reduces to the standard class $\mathcal{C}(\alpha)$ of convex functions of order $\alpha$, where $\alpha \in [0, 1)$.

%%Setting $\alpha=1-\frac{c}{2}$ and $t=0$ in the second inequality of Theorem \ref{T-thm-001}, we obtain the following result.
%%\begin{cor}
%%Let \( f \in C(\alpha) \) with $\alpha\in[0,1)$. Then, the logarithmic coefficients $\gamma_n$ of \( f \) satisfy the inequalities
%%\[
%%\sum_{n=1}^{\infty} |\gamma_n|^2 \leq \frac{1}{4} \sum_{n=1}^{\infty} \frac{|G_n|^2}{n^2},
%%\]
%%where \( G_n \) are the Taylor's coefficients of
%%\[
%%\psi_\alpha(z) = \frac{(1 - 2\alpha)z}{1 - z} \cdot \frac{1}{1 - (1 - z)^{1-2\alpha}} =: 1 + \sum_{n=1}^{\infty} G_n z^n.
%%\]
%%The inequality is sharp for the function
%%\[f(z) = \frac{(1 - z)^{2\alpha - 1} - 1}{1 - 2\alpha}.\]
%%\end{cor}

\subsection{The class $\mathcal{C}(A,B)$}
 For \(-1 \leq B < A \leq 1\), the class \(\mathcal{C}(A,B)\) of \emph{Janowski convex functions} consists of functions \(f \in \mathcal{A}\) satisfying the subordination relation
\[
1 + \frac{zf''(z)}{f'(z)} \prec \frac{1 + Az}{1 + Bz}.
\]
%The class \(\mathcal{C}(A,B)\) was first introduced and systematically studied by Janowski \cite{Janowski1,Janowski2}. A necessary and sufficient condition for membership in \(\mathcal{C}(A,B)\) was later given by Silverman and Silvia \cite{SilvermanSilvia1985}.
It is an easy exercise to see that the function $K_{A,B}$ defined by
\begin{align}\label{T-fun-10}
K_{A,B}(z) =
\begin{cases}
\displaystyle \frac{1}{A} \left( (1+Bz)^{A/B} - 1 \right), & A \neq 0, \; B \neq 0, \\[10pt]
\displaystyle \frac{1}{B} \log(1+Bz), & A = 0, \\[10pt]
\displaystyle \frac{1}{A} \left( e^{Az} - 1 \right), & B = 0,
\end{cases}
\end{align}
is a function in $\mathcal{C}(A,B)$. Using the function, we define another function $\psi_{A,B}$ as 
\begin{align}\label{T-70}
    \psi_{A,B}=\frac{zK'_{A,B}(z)}{K_{A,B}(z)}=: 1+\sum_{n=1}^{\infty}E_nz^n.
\end{align}

% Recently, Alimohammadi et al. \cite{cho-Alimohammadi-2021} studied logarithmic coefficients for the class $\mathcal{C}(0,B)$ and proved the following result.

% \begin{customthm}{C}\cite{cho-Alimohammadi-2021}\label{T-thm-C}
%  Let the function \( f \in \mathcal{C}(0, B) \) for \(-0.99 \leq B < 0\). Then, the logarithmic coefficients of \( f \) satisfy the inequalities

% \[
% |\gamma_n| \leq \frac{|L_n|}{2n}, \quad n \in \mathbb{N},
% \]

% and

% \[
% \sum_{n=1}^{\infty} |\gamma_n|^2 \leq \frac{1}{4} \sum_{n=1}^{\infty} \frac{|L_n|^2}{n^2},
% \]

% where \( L_n = \psi_B^{(n)}(0)/n! \) and

% \[
% \psi_{B}(z) = \frac{Bz}{(1 + Bz) \log(1 + Bz)} = 1 + \sum_{n=1}^{\infty} L_n z^n.
% \]
% These inequalities are sharp. 
% \end{customthm}
% The proof of Theorem \ref{T-thm-C} completely relies on the convexity of $\psi_B(z)$. The authors of \cite{cho-Alimohammadi-2021} claimed to verify this property pictorially (see \cite[Fig. 2]{cho-Alimohammadi-2021}) using \textsc{Maple}\textsuperscript{\texttrademark} software by showing that
% \begin{equation}\label{eq:convexity-condition}
% \operatorname{Re}\Psi_B(z)= \operatorname{Re} \left( 1 + \frac{z \psi_B''(z)}{\psi_B'(z)} \right) > 0 \quad \text{for} \quad z \in \mathbb{D}.
% \end{equation}
% However, this claim is incorrect.  For $c=1$, we get $\psi(Bz)=\psi_B(z)$ with $B\in[-1,0)$.
% In proposition \ref{T-prop-001}, we have proved that the function $\psi(z)$ \eqref{T-fun-002} with $c=1$ is not convex in $\mathbb{D}$ and therefore $\psi_B(z)$ is not a convex function.\\

It is worth noting that Cho et al. \cite[Theorem 2.2]{cho-Alimohammadi-2021} studied the logarithmic coefficients for the class $\mathcal{C}(0,B)$ where $B \in [-0.99, 0)$, establishing results analogous to Theorem \ref{T-thm-A}.

\begin{customthm}{C}\cite[Theorem 2.2]{cho-Alimohammadi-2021}\label{T-thm-C}
Let \( f \in \mathcal{C}[0, B] \) for \(-0.99 \leq B < 0\). Then, the logarithmic coefficients $\gamma_n$ of \( f \) satisfy the inequalities
$$
|\gamma_n| \leq \frac{|E_n|}{2n}, \quad n \in \mathbb{N},
$$
and
\[
\sum_{n=1}^\infty |\gamma_n|^2 \leq \frac{1}{4} \sum_{n=1}^\infty \frac{|E_n|^2}{n^2},
\]
where $ E_n $ are the Taylor coefficients of $\psi_{0,B}$ defined by \eqref{T-70} and the inequalities are sharp.
\end{customthm}

The proof of Theorem \ref{T-thm-C} relies on the convexity of $\psi_{0,B}(z)$ defined in \eqref{T-70}. The authors claimed to verify this property pictorially (see \cite[Fig. 2]{cho-Alimohammadi-2021}) using \textsc{Maple}\textsuperscript{\texttrademark}. From the first inequality of Theorem \ref{T-thm-C}, it follows that
$$|\gamma_1| \le \frac{|B|}{4}, \quad |\gamma_2| \le \frac{5|B|^2}{48}, \quad |\gamma_3| \le \frac{|B|^3}{16}
$$
for functions in the class $\mathcal{C}[0, B]$ with $-0.99 \leq B < 0$. However, these estimates are not universally correct, as we demonstrate in Theorem \ref{T-thm-20}. This suggests that the underlying claim regarding the convexity of $\psi_{0,B}(z)$ is incorrect.\\

In the next theorem, we obtain sharp estimates of initial logarithmic coefficients for functions in the class $\mathcal{C}(A,B)$.

\begin{thm}\label{T-thm-20}
Let \( f \in \mathcal{C}(A,B) \) with \( -1 \leq B < A \leq 1 \). Then the logarithmic coefficients \( \gamma_n \) of \( f(z) \) satisfy
\begin{align*}
|\gamma_1| &\leq \frac{A - B}{4},\\
|\gamma_2| &\leq 
\begin{cases}
\displaystyle \frac{A-B}{12} & ~~\text{for}~ |A-5B|\le 4,\\[2mm]
\displaystyle \frac{A-B}{48}|A-5B| & ~~\text{for}~ |A-5B|>4,
 \end{cases}\\[3mm]
|\gamma_3| &\leq 
\begin{cases}
\displaystyle \frac{A - B}{24}, &~~\text{for}~(\mu,\nu)\in D_1\cup D_2, \\[2mm]
\displaystyle \frac{A - B}{48}|B(A - 3B)|, & ~~\text{for}~(\mu,\nu)\in D_6, \\[2mm]
\displaystyle \frac{A-B}{72\sqrt{3}} \cdot \frac{\left( |A - 5B| + 2 \right)^{3/2}}{\sqrt{|A - 5B| + 2 + 3B^{2} - AB}}  & ~~\text{for}~(\mu,\nu)\in D_8\cup D_9,
\end{cases}
\end{align*}
where \(\mu = \frac{1}{2}(A - 5B)\) and \(\nu = \frac{1}{2}(3B^2 - AB)\) and the sets $D_1,D_2, D_6, D_8,D_9$ are defined as in Lemma \ref{T-34}.
All the estimates are sharp.
\end{thm}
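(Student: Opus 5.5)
Write $p(z)=1+zf''(z)/f'(z)$. Since $p\prec (1+Az)/(1+Bz)$, there is a Schwarz function $\omega(z)=\sum_{k\ge1}c_kz^k$ with
\[
p(z)=\frac{1+A\omega(z)}{1+B\omega(z)} .
\]
The plan is to express $\gamma_1,\gamma_2,\gamma_3$ in terms of $c_1,c_2,c_3$ and then apply the sharp estimates for Schwarz-function functionals. These are the inequality $|c_2+\mu c_1^2|\le\max\{1,|\mu|\}$ and the Prokhorov--Szynal type bound for $|c_3+\mu c_1c_2+\nu c_1^3|$. The latter, with its regions $D_1,\dots,D_{12}$, is presumably what Lemma \ref{T-34} records.

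\textbf{Step 1: coefficient relations.} Put $f(z)=z+a_2z^2+a_3z^3+a_4z^4+\cdots$. From $p(z)=1+\sum p_nz^n$ we get
\[
2a_2=p_1,\qquad 6a_3=p_2+2a_2p_1,\qquad 12a_4=p_3+2a_2p_2+3a_3p_1,
\]
with
\[
p_1=(A-B)c_1,\quad p_2=(A-B)(c_2-Bc_1^2),\quad p_3=(A-B)(c_3-2Bc_1c_2+B^2c_1^3).
\]
The logarithmic coefficients follow from $\log(f/z)$:
\[
2\gamma_1=a_2,\qquad 2\gamma_2=a_3-\tfrac12a_2^2,\qquad 2\gamma_3=a_4-a_2a_3+\tfrac13a_2^3 .
\]

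\textbf{Step 2: $\gamma_1$ and $\gamma_2$.} Substituting gives $\gamma_1=(A-B)c_1/4$, so $|\gamma_1|\le (A-B)/4$ because $|c_1|\le1$. Next,
\[
\gamma_2=\frac{A-B}{12}\Bigl(c_2+\frac{A-5B}{4}c_1^2\Bigr).
\]
The lemma $|c_2+tc_1^2|\le\max\{1,|t|\}$ then yields exactly the two-case bound. Sharpness comes from $\omega(z)=z^2$ in the case $|A-5B|\le4$, and from $\omega(z)=z$, i.e.\ $f=K_{A,B}$, in the case $|A-5B|>4$.

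\textbf{Step 3: $\gamma_3$.} This step is the main obstacle; it is a careful but routine expansion. Carrying it out, I expect
\[
\gamma_3=\frac{A-B}{24}\bigl(c_3+\mu c_1c_2+\nu c_1^3\bigr),\qquad \mu=\tfrac12(A-5B),\quad \nu=\tfrac12(3B^2-AB).
\]
The key consistency check is that the coefficients of $c_1c_2$ and $c_1^3$ come out as exactly these $\mu,\nu$. One can verify this against $f=K_{A,B}$, where $\omega(z)=z$ forces $\gamma_3=(A-B)(1+\mu+\nu)/24$.

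I would then invoke Lemma \ref{T-34} (Prokhorov--Szynal):
\begin{itemize}
\item on $D_1\cup D_2$ the functional is bounded by $1$;
\item on $D_6$ it is bounded by $|\nu|$;
\item on $D_8\cup D_9$ it is bounded by $\frac23(|\mu|+1)\bigl(\frac{|\mu|+1}{3(|\mu|+1+\nu)}\bigr)^{1/2}$.
\end{itemize}
Substituting $|\mu|=|A-5B|/2$ and $\nu=(3B^2-AB)/2$ turns this last bound into the stated expression $\frac{(|A-5B|+2)^{3/2}}{3\sqrt3\sqrt{|A-5B|+2+3B^2-AB}}$. In the $D_6$ case the bound $|\nu|$ gives $\frac{A-B}{48}|B(A-3B)|$.

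Sharpness on $D_1\cup D_2$ comes from $\omega(z)=z^3$. On $D_6$ and $D_8\cup D_9$ I would use the extremal Schwarz functions of Prokhorov--Szynal: on $D_6$ essentially $\omega(z)=z$ up to rotation, and on $D_8\cup D_9$ a Blaschke-type $\omega(z)=z(z+t)/(1+tz)$ with the appropriate $t$. Each such $\omega$ is inserted into $p$ to define the extremal $f$.
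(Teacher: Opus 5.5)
Your approach is the paper's own. You write $1+zf''/f'=(1+A\omega)/(1+B\omega)$, express $\gamma_1,\gamma_2,\gamma_3$ through $c_1,c_2,c_3$, and apply $|c_1|\le 1$, Lemma~\ref{T-200} and Lemma~\ref{T-34}. The formulas you anticipate agree with \eqref{T-91}--\eqref{T-92}. Indeed, $\frac{24}{A-B}\,2\gamma_3=2c_3+\bigl[(3A-7B)-2(A-B)\bigr]c_1c_2+\bigl[(A^2-5AB+6B^2)-2(A-B)(A-2B)+(A-B)^2\bigr]c_1^3$, which reduces to $2c_3+(A-5B)c_1c_2+(3B^2-AB)c_1^3$. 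All the bounds are correct. Sharpness is also correct for $\gamma_1$ and $\gamma_2$, and for $\gamma_3$ on $D_1\cup D_2$ (via $\omega=z^3$) and on $D_6$ (via $\omega=z$). Two concrete points are wrong, however.

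\textbf{The consistency check is miscomputed.} For $\omega(z)=z$ one has $c_1=1$ and $c_2=c_3=0$. Hence $\gamma_3(K_{A,B})=\frac{A-B}{24}\,\nu=-\frac{A-B}{48}B(A-3B)$, not $\frac{A-B}{24}(1+\mu+\nu)$. This test never sees $\mu$, so it cannot confirm the $c_1c_2$ coefficient.

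\textbf{The extremal you name on $D_8\cup D_9$ does not attain the bound.} This is the more serious problem. For $\omega(z)=z(z+t)/(1+tz)$ with real $t$ you get $c_1=t$, $c_2=1-t^2$, $c_3=-t(1-t^2)$. Hence $c_3+\mu c_1c_2+\nu c_1^3=t\bigl[(\mu-1)(1-t^2)+\nu t^2\bigr]$, which involves $\mu-1$ rather than $|\mu|+1$. The reason is that $\omega\mapsto-\omega$ amounts to replacing $\mu$ by $-\mu$ in this functional, so your family is the extremal one only when $\mu<0$. For admissible $(A,B)$, however, $\mu<0$ forces $0<B<A$, and this places $(\mu,\nu)$ in $D_1\cup D_2$. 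So on $D_8\cup D_9$ one always has $\mu>0$. Concretely, at the paper's point $(A,B)=(0,-0.795)\in D_8$, your family gives $|c_3+\mu c_1c_2+\nu c_1^3|\le\nu\approx 0.948$, whereas the $D_8$ bound is $\approx 1.002$.

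The correct choice, as in the paper, is $\omega(z)=z(t-z)/(1-tz)$ with $t=\bigl(\frac{|\mu|+1}{3(|\mu|+1+\nu)}\bigr)^{1/2}$. It gives $c_1=t$, $c_2=t^2-1$, $c_3=t(t^2-1)$, and the functional equals $t\bigl[t^2(1+\mu+\nu)-(1+\mu)\bigr]=-\tfrac{2}{3}(1+\mu)t$. Alternatively, you can invoke the sharpness clause of Lemma~\ref{T-34} directly. This works because every $\omega\in\mathcal{B}_0$ produces some $f\in\mathcal{C}(A,B)$ through $1+zf''/f'=(1+A\omega)/(1+B\omega)$.
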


On the other hand, Allu and Sharma \cite{Allu-Sharma-2024} proved logarithmic coefficient inequalities similar to Theorem \ref{T-thm-B} for functions in the classes $\mathcal{C}(0,B)$ with $B\in [-0.99,0)$ and $\mathcal{C}(A,0)$ with $A\in (0,1]$.
In the next theorem, we prove logarithmic coefficient inequalities similar to Theorem \ref{T-thm-B} for functions in the class $\mathcal{C}(A,B)$ with $-1\leq B< A \leq 1$, which generalize the previous results of Allu and Sharma \cite{Allu-Sharma-2024}.

\begin{thm}\label{T-thm-10}
Let $ f \in \mathcal{C}(A,B) $ with $-1\leq B< A \leq 1$. Then the logarithmic coefficients $\gamma_n$ of $ f $ satisfy the inequalities  
$$
\sum_{n=1}^{\infty} n^2 |\gamma_n|^2 \leq \frac{1}{4} \sum_{n=1}^{\infty} |E_n|^2,
$$
and  
$$
\sum_{n=1}^{\infty} (n+1)^t |\gamma_n|^2 \leq \frac{1}{4} \sum_{n=1}^{\infty} \frac{(n+1)^t}{n^2} |E_n|^2 \quad \text{for } t \leq 2,
$$
where the coefficients $E_n$ are defined as in \eqref{T-70}. Moreover, the first inequality is sharp.
\end{thm}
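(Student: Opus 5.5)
The plan is to use the fact that $f\in\mathcal{C}(A,B)$ if and only if $g=zf'\in\mathcal{S}^*(A,B)$. Then I transfer the problem to a subordination statement about $zF_f'$, where $F_f=\log(f(z)/z)$, and compare Taylor coefficients through Rogosinski's theorem on subordinate series in the $\ell^2$ form: if $p\prec q$, then $\sum|p_n|^2\le\sum|q_n|^2$. This $\ell^2$ form needs no convexity of the majorant. That is exactly why the convexity of $\psi_{A,B}$, which was the flaw in the earlier arguments, is avoided.

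\textbf{Step 1.} Since $K_{A,B}$ is the extremal function of $\mathcal{C}(A,B)$, $\psi_{A,B}=zK'_{A,B}/K_{A,B}$ plays the role of the "starlikeness function" of the extremal. The key known fact I would invoke is the Ma--Minda subordination theorem for convex classes: for $f\in\mathcal{C}(\varphi)$ one has
\[
\frac{zf'(z)}{f(z)}\prec \frac{zK'(z)}{K(z)}=\psi_{A,B}(z).
\]
This holds whenever $\varphi$ is a Ma--Minda function, and $(1+Az)/(1+Bz)$ with $-1\le B<A\le 1$ is one. The subordination is proved by Ma and Minda via $f(z)/z\prec K(z)/z$ and the Briot--Bouquet/Ruscheweyh convolution argument. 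It does not need $\psi_{A,B}$ convex, only univalent, and Ma--Minda show this subordination directly. Since $zf'/f-1 = zF_f'(z)=\sum_{n\ge1}2n\gamma_n z^n$, we get
\[
\sum_{n=1}^\infty 2n\gamma_n z^n \prec \sum_{n=1}^\infty E_n z^n .
\]

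\textbf{Step 2.} Apply Rogosinski's theorem: if $p\prec q$ in $\mathbb{D}$, then $\sum|p_n|^2r^{2n}\le\sum|q_n|^2r^{2n}$ for $0<r<1$. This is the Littlewood subordination principle applied to the $H^2$ norm, since $p=q\circ\omega$ with $|\omega(z)|\le|z|$. Letting $r\to1^-$ gives $4\sum n^2|\gamma_n|^2\le\sum|E_n|^2$, which is the first inequality. Sharpness comes from $f=K_{A,B}$, for which $2n\gamma_n=E_n$ for every $n$.

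\textbf{Step 3 (second inequality).} Write $(n+1)^t|\gamma_n|^2 = \frac{(n+1)^t}{n^2}\, n^2|\gamma_n|^2$. For $t\le2$ the weight $\lambda_n=(n+1)^t/n^2$ is nonincreasing in $n$. Indeed, $\log\lambda_n=t\log(n+1)-2\log n$ has derivative $t/(n+1)-2/n<0$ when $t\le 2$. Set $a_n=4n^2|\gamma_n|^2$ and $b_n=|E_n|^2$. The $r$-version of Step 2 applied to truncations gives the partial-sum inequalities $\sum_{k=1}^N a_k\le\sum_{k=1}^N b_k$ for every $N$. This follows from the stronger Rogosinski statement that $p\prec q$ implies $\sum_{k\le N}|p_k|^2\le\sum_{k\le N}|q_k|^2$ for all $N$, which is proved by the same argument applied to truncated $q$. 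Abel summation with the nonincreasing nonnegative weights $\lambda_n$ then yields $\sum\lambda_n a_n\le\sum\lambda_n b_n$, which is the second inequality.

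\textbf{Main obstacle.} The hardest step is Step 1. I must make sure the subordination $zf'/f\prec\psi_{A,B}$ holds on the full range $-1\le B<A\le1$. This includes degenerate cases such as $B=-1$, where the image of $\varphi$ is a half-plane, and $A=0$ or $B=0$, where $K_{A,B}$ is logarithmic or exponential. I would also need to confirm that the partial-sum form of Rogosinski's theorem is quotable as stated. I would cite Ma--Minda's Theorem on $zf'/f\prec zk'/k$ for convex Ma--Minda classes. If that result requires extra hypotheses, I would instead reprove it via the Ruscheweyh convolution lemma applied to $f(z)/z\prec K_{A,B}(z)/z$.
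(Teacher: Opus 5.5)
Your proof is correct and follows the paper's route. The paper also reduces to the subordination $zf'(z)/f(z)\prec\psi_{A,B}(z)$, but it obtains it from the Briot--Bouquet results (Lemmas \ref{T-28} and \ref{T-29}), which apply directly because $(1+Az)/(1+Bz)$ is convex with positive real part; it then simply cites Lemma \ref{T-24} for $\mathcal{S}^*(\psi_{A,B})$. Your Steps 2--3 (Rogosinski's partial-sum theorem, then Abel summation against the nonincreasing weights $(n+1)^t/n^2$) are exactly the argument behind that lemma, and they make explicit why no convexity or other Ma--Minda hypotheses on $\psi_{A,B}$ are needed, as the paper's remark after Lemma \ref{T-24} asserts.
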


\subsection{The class $\mathcal{S}_\alpha$}
For \(-\pi/2 < \alpha < \pi/2\), the class \(\mathcal{S}_{\alpha}\), sometimes known as the Robertson class, is defined as follows
\[
\mathcal{S}_{\alpha} =: \left\{ f \in \mathcal{A} : \operatorname{Re} \left\{ e^{i\alpha} \left( 1 + \frac{z f''(z)}{f'(z)} \right) \right\} > 0\right\}.
\]
%Unlike convex or spirallike functions, members of \(\mathcal{S}_{\alpha}\) are not necessarily univalent. Robertson \cite{Robertson-1969} himself showed that univalence in \(\mathcal{S}_{\alpha}\) is not guaranteed for all \(\alpha\). Later, Pfaltzgraff \cite{Pfaltzgraff1975} proved that functions in \(\mathcal{S}_{\alpha}\) are univalent if $0 < \cos \alpha \leq \frac{1}{2}$.
%The study of \(\mathcal{S}_{\alpha}\) and its generalizations, such as \(\mathcal{S}_{\alpha}(\beta)\) introduced by Pinchuk \cite{Pinchuk1971}, continues to be of interest in geometric function theory, particularly in exploring the boundary between univalent and non-univalent analytic function classes.
For the class $\mathcal{S}_\alpha$, we obtain the following counterpart of Theorem \ref{T-thm-10}.

\begin{thm}\label{T-thm-30}
If $ f \in \mathcal{S}_\alpha ~(|\alpha|<\pi/2)$, then the logarithmic coefficients $\gamma_n$ of $ f $ satisfy the inequalities  
$$
\sum_{n=1}^{\infty} n^2 |\gamma_n|^2 \leq \frac{1}{4} \sum_{n=1}^{\infty} |F_n|^2,
$$
and  
$$
\sum_{n=1}^{\infty} (n+1)^t |\gamma_n|^2 \leq \frac{1}{4} \sum_{n=1}^{\infty} \frac{(n+1)^t}{n^2} |F_n|^2 \quad \text{for } t \leq 2,
$$
where the $F_n$ are Taylor coefficients of the function  
$$
\psi_2(z) = \frac{Az}{(1-z)(1-(1-z)^{A})} =: 1 + \sum_{n=1}^{\infty} F_n z^n,
$$
with $A=e^{-2i\alpha }$. Moreover, the first inequality is sharp.   

\end{thm}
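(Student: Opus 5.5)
Our plan is to run the same subordination argument that we expect for Theorems \ref{T-thm-001} and \ref{T-thm-10}. It uses only Rogosinski's theorem for subordinate functions and no convexity of $\psi_2$.

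\textbf{Extremal function and subordination.} Put $A=e^{-2i\alpha}$ and
$$
f_\alpha(z)=\frac{1-(1-z)^{A}}{A}.
$$
Then $f_\alpha'(z)=(1-z)^{A-1}$, and a direct computation gives
$$
1+\frac{zf_\alpha''(z)}{f_\alpha'(z)}=\frac{1+(A-2)z}{1-z}\cdot\frac{1}{\,}\!\!\;=\frac{1-z+(1-A)z}{1-z}=\frac{1+e^{-2i\alpha}z}{1-z}\cdot\frac{1}{1}\;-\;\frac{(1+A)z- (2-A)z}{1-z}\cdot 0 .
$$
More cleanly, $1+zf_\alpha''/f_\alpha'=1+(1-A)z/(1-z)=(1-Az)/(1-z)$. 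We will check this against the convention $(1+e^{-2i\alpha}z)/(1-z)$ and adjust the sign of $z$ by a rotation if necessary, so that $f_\alpha\in\mathcal S_\alpha$ is the Ma--Minda extremal function for $\varphi(z)=(1+e^{-2i\alpha}z)/(1-z)$. Its quotient is $zf_\alpha'(z)/f_\alpha(z)=Az(1-z)^{A-1}/(1-(1-z)^A)=\psi_2(z)$.

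The key structural fact is the Ma--Minda subordination theorem for convex-type classes: if $f\in\mathcal C(\varphi)$ with $\varphi$ convex, then
$$
\frac{zf'(z)}{f(z)}\prec \frac{zk'(z)}{k(z)},
$$
where $k$ is the extremal function. Since $\varphi(\mathbb D)$ is a half-plane, $\varphi$ is convex, so this gives $zf'(z)/f(z)\prec\psi_2(z)$ for every $f\in\mathcal S_\alpha$. Here $\psi_2$ is univalent as a quotient in this setting. We will cite Ma--Minda (or the lemma stated in Section \ref{T-section-002}) for this. Because $\varphi(\mathbb D)$ is a rotated half-plane rather than symmetric about the real axis, we must check that the Ma--Minda proof (via convexity of $\varphi$ and the Ruscheweyh/Miller--Mocanu lemma) does not use symmetry. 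We expect this check to be the main obstacle. If it fails, we will fall back on the general Miller--Mocanu Briot--Bouquet-type lemma, which needs only convexity of $\varphi$.

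\textbf{Series inequalities.} Differentiating \eqref{T-71} gives
$$
z\Bigl(\log\frac{f(z)}{z}\Bigr)'=\frac{zf'(z)}{f(z)}-1=\sum_{n\ge1}2n\gamma_n z^n,
$$
and the subordination yields $\sum 2n\gamma_n z^n\prec\psi_2(z)-1=\sum F_nz^n$. By Rogosinski's theorem (the Littlewood subordination principle for $H^2$ norms), subordination implies
$$
\sum_{n\ge1}|2n\gamma_n|^2r^{2n}\le\sum_{n\ge1}|F_n|^2r^{2n},\qquad 0<r<1.
$$
Letting $r\to1^-$ gives the first inequality, with equality for $f=f_\alpha$, which proves sharpness.

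For the weighted inequality, set $a_n=4n^2|\gamma_n|^2$ and $b_n=|F_n|^2$. Rogosinski gives partial sums $\sum_{n\le N}a_n\le\sum_{n\le N}b_n$ for every $N$; this is the stronger form of his theorem, valid for subordination to univalent functions. The weights $w_n=(n+1)^t/n^2$ are nonincreasing when $t\le2$, since $(1+1/n)^t/n^{2-t}$ decreases. Abel summation
$$
\sum_{n=1}^N w_n(a_n-b_n)=\sum_{N'<N}(w_{N'}-w_{N'+1})S_{N'}+w_NS_N,\qquad S_m=\sum_{n\le m}(a_n-b_n)\le0,
$$
then shows the weighted sum is $\le0$. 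Letting $N\to\infty$ gives $\sum(n+1)^t|\gamma_n|^2\le\frac14\sum (n+1)^t|F_n|^2/n^2$. The partial-sum form of Rogosinski needs univalence of $\psi_2$, which we will take from the lemma in Section \ref{T-section-002}. If univalence is unavailable, we will instead use the form valid for subordination to any function in $H^2$ together with convexity of $\varphi$.
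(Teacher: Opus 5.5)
\textbf{There is a genuine gap in the step $zf'(z)/f(z)\prec\psi_2(z)$.} The obstacle is not the missing symmetry of $\varphi_2(\mathbb{D})$; it is positivity of the real part.

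Every tool you invoke for this step needs a real-part hypothesis, so your fallback claim that the Briot--Bouquet lemma ``needs only convexity of $\varphi$'' is not correct:
\begin{itemize}
\item Ma--Minda's theorem is proved under the standing assumption $\operatorname{Re}\varphi>0$.
\item The Briot--Bouquet results (Lemma \ref{T-29} and the subordination theorem behind it) require $\operatorname{Re}[\beta h+\gamma]>0$, which here means $\operatorname{Re}\varphi_2>0$. Rescaling $g$ gives back the same condition.
\item Lemma \ref{T-28} requires $\operatorname{Re}\psi_2>0$.
\end{itemize}

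For $\alpha\neq0$ all of these fail:
\begin{itemize}
\item Since
\[
e^{i\alpha}\varphi_2(z)=i\sin\alpha+\cos\alpha\,\frac{1+z}{1-z},
\]
we have $\varphi_2(\mathbb{D})=\{w:\operatorname{Re}(e^{i\alpha}w)>0\}$. This half-plane is not contained in $\{\operatorname{Re}w>0\}$.
\item If $\cos 2\alpha>0$, then $(1-z)^A\to0$ as $z\to1$, so $\psi_2(z)=\frac{A}{1-z}(1+o(1))$. Letting $z\to1$ with $\arg(1-z)$ close to $\frac{\pi}{2}\operatorname{sgn}\alpha$ gives values of $\psi_2$ with negative real part.
\item For $\alpha=\pi/4$ (so $A=-i$), $\psi_2$ has a pole at $z_0=1-e^{-2\pi}\in\mathbb{D}$. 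The extremal function vanishes at $z_0$, so $\log(f(z)/z)$ need not be analytic in $\mathbb{D}$ and ``$g\prec\psi_2$'' has no meaning. In that case $\sum|F_n|^2=\infty$ and the inequalities are vacuous, but this does not help for the remaining $\alpha\neq0$.
\end{itemize}

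The paper crosses this same step by asserting that $\varphi_2$ has positive real part and then applying Lemmas \ref{T-29} and \ref{T-28}. That assertion is false for $\alpha\neq0$, so following the paper does not close the gap. As it stands, your argument (and the paper's) is complete only for $\alpha=0$. For $\alpha\neq0$ you need a genuinely different justification of $\mathcal{S}_\alpha\subset\mathcal{S}^*(\psi_2)$, or of the coefficient inequalities directly.

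\textbf{Your extremal function is also wrong.} For $f_\alpha=(1-(1-z)^A)/A$ one gets
\[
1+\frac{zf_\alpha''}{f_\alpha'}=\frac{1-Az}{1-z},\qquad \frac{zf_\alpha'}{f_\alpha}=(1-z)^A\psi_2(z)\neq\psi_2(z).
\]
No rotation of $z$ repairs this; for $\alpha=0$ your $f_\alpha$ is just $z$. The correct choice is $h(z)=((1-z)^{-A}-1)/A$. Then $h'(z)=(1-z)^{-(1+A)}$, $1+zh''/h'=(1+Az)/(1-z)$, and $zh'/h=\psi_2$.

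\textbf{The rest of your plan is sound.} It amounts to a self-contained proof of Lemma \ref{T-24}. Rogosinski's partial-sum inequality holds for $g=\psi_2\circ\omega$ with any analytic $\psi_2$, so univalence of $\psi_2$ is not needed there. The weights $(n+1)^t/n^2$ are nonincreasing for every $t\le2$; for $t<0$ this holds because both $(n+1)^t$ and $n^{-2}$ decrease, not by the factorization you wrote. So the Abel summation step is correct once the subordination is available.
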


In the next theorem, we obtain sharp estimates of the initial logarithmic coefficients for functions in the class $\mathcal{S}_\alpha$.

\begin{thm}\label{T-thm-40}
If $f \in \mathcal{S}_\alpha$ for $|\alpha| < \pi/2$, then the logarithmic coefficients $\gamma_n$ of $f(z)$ satisfy
     \begin{align*}
  |\gamma_1| \leq \frac{\cos\alpha}{2}, \quad
|\gamma_2| \leq \frac{\cos\alpha}{6}\sqrt{4+5\cos^2\alpha}, \quad
|\gamma_3| \leq \frac{\cos\alpha}{3}\sqrt{1+3\cos^2\alpha}.
 \end{align*}
 All the estimates are sharp.  
\end{thm}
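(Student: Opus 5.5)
The plan is to reduce all three estimates to coefficient functionals of a Carathéodory function and then use known sharp lemmas for such functionals with complex parameters. Write $\tau=e^{-i\alpha}\cos\alpha$, so that $|\tau|=\cos\alpha$. Since $f\in\mathcal{S}_\alpha$, there is $p(z)=1+\sum_{n\ge1}c_nz^n$ with $\operatorname{Re}p>0$ and
$$
e^{i\alpha}\Big(1+\frac{zf''(z)}{f'(z)}\Big)=\cos\alpha\, p(z)+i\sin\alpha ,
\quad\text{i.e.}\quad
\frac{zf''(z)}{f'(z)}=\tau\,(p(z)-1).
$$

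\textbf{Step 1: express $\gamma_1,\gamma_2,\gamma_3$ through $c_1,c_2,c_3$.} Compare coefficients of $z,z^2,z^3$ in $zf''=\tau(p-1)f'$ to get $a_2,a_3,a_4$ as polynomials in $\tau$ and $c_1,c_2,c_3$. Then substitute into
$$
\gamma_1=\tfrac12a_2,\qquad \gamma_2=\tfrac12\big(a_3-\tfrac12a_2^2\big),\qquad \gamma_3=\tfrac12\big(a_4-a_2a_3+\tfrac13a_2^3\big).
$$
This gives $\gamma_1=\tau c_1/4$, so $|\gamma_1|\le\cos\alpha/2$ follows at once from $|c_1|\le2$. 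The next coefficient has the form $\gamma_2=\frac{\tau}{12}\big(c_2-\mu c_1^2\big)$ with a complex $\mu$ that depends linearly on $\tau$. Likewise $\gamma_3=\frac{\tau}{K}\big(c_3-2\beta c_1c_2+\delta c_1^3\big)$ with complex $\beta,\delta$ that are polynomials in $\tau$, for an explicit constant $K$.

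\textbf{Step 2: the bound for $\gamma_2$.} Apply the complex-parameter Keogh--Merkes type inequality $|c_2-\mu c_1^2|\le 2\max\{1,|2\mu-1|\}$ for $\mu\in\mathbb{C}$. Next compute $|2\mu-1|$ in terms of $\cos\alpha$; this uses $|\tau|^2=\cos^2\alpha$ and $\operatorname{Re}\tau=\cos^2\alpha$. It should produce a quantity of the form $\sqrt{4+5\cos^2\alpha}$, which exceeds the other branch for every $\alpha$, so only one case survives. Sharpness comes from $p(z)=(1+z)/(1-z)$, i.e. $c_n=2$, which corresponds to the extremal function with $f'(z)=(1-z)^{-(1+A)}$, $A=e^{-2i\alpha}$ (up to rotation).

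\textbf{Step 3: the bound for $\gamma_3$, the main obstacle.} Because $\beta$ and $\delta$ are complex, the real-parameter Ali and Prokhorov--Szynal lemmas do not apply directly. I would first try the Libera--Zlotkiewicz parametrisation
$$
2c_2=c_1^2+(4-c_1^2)\zeta_1,\qquad
4c_3=c_1^3+2(4-c_1^2)c_1\zeta_1-(4-c_1^2)c_1\zeta_1^2+2(4-c_1^2)(1-|\zeta_1|^2)\zeta_2 ,
$$
with $|\zeta_i|\le1$. Rotate so that $c_1=c\in[0,2]$, apply the triangle inequality in $\zeta_2$, and then maximise over $|\zeta_1|\le1$ and $c\in[0,2]$. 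I expect the maximum at $c=2$, again giving $c_n=2$ and a closed form such as $\frac{\cos\alpha}{3}\sqrt{1+3\cos^2\alpha}$. If the $\zeta_1$-maximisation becomes messy, the fallback is to work with the Schwarz function $\omega$ in $p=(1+\omega)/(1-\omega)$ and use Carlson's inequality $|w_3|\le1-|w_1|^2-|w_2|^2/(1+|w_1|)$. Either way, the delicate point is proving that the boundary value $c=2$ dominates the interior critical points for all $|\alpha|<\pi/2$. Sharpness again uses the same extremal function as in Step 2.
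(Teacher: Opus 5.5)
Your Step 3 carries all of the difficulty, and it is stated as an expectation rather than carried out. Carrying your Step 1 through gives $\delta=0$, namely $\gamma_3=\frac{\tau}{24}\bigl(c_3+\frac{\tau}{2}c_1c_2\bigr)$. In the Libera--Zlotkiewicz form the triangle inequality in $\zeta_2$ loses nothing. But the remaining maximisation over $\zeta_1\in\overline{\mathbb{D}}$ and $c\in[0,2]$ with complex $\tau$ is the whole estimate, and ``I expect the maximum at $c=2$'' is precisely what must be proved.

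The Carlson fallback does not work. It only controls $|w_3|$, so it forces a triangle inequality that discards the phase of $w_3$. Already for $\alpha=0$, where the target is $|w_3+3w_1w_2+2w_1^3|\le 2$, it only gives $\max_t(4t-2t^3)=\frac{8\sqrt6}{9}>2$.

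The missing idea is the second inequality of Lemma \ref{T-200}, which is what the paper uses. Write $p=(1+\omega)/(1-\omega)$ with $\omega=\sum w_nz^n$. Then $c_3+\frac{\tau}{2}c_1c_2=2\bigl(w_3+(2+\tau)w_1w_2+(1+\tau)w_1^3\bigr)$, which is exactly $w_3+(1+\lambda)w_1w_2+\lambda w_1^3$ with $\lambda=1+\tau=\frac{3+A}{2}$. Since $|1+\tau|^2=1+3\cos^2\alpha\ge 1$, this gives $|\gamma_3|\le\frac{\cos\alpha}{12}\sqrt{1+3\cos^2\alpha}$ at once, with equality for $\omega(z)=z$ (your $c_n=2$).

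A second problem is the constants: you never computed them, and they do not match the statement.
\begin{itemize}
\item Your own $\gamma_2=\frac{\tau}{12}\bigl(c_2+\frac{\tau}{4}c_1^2\bigr)$ gives $\mu=-\tau/4$ and $|2\mu-1|=|1+\tau/2|=\frac12\sqrt{4+5\cos^2\alpha}$. Hence $|\gamma_2|\le\frac{\cos\alpha}{12}\sqrt{4+5\cos^2\alpha}$.
\item For your extremal function $f'(z)=(1-z)^{-(1+A)}$ one computes $\gamma_2=\frac{(1+A)(5+A)}{48}$ and $\gamma_3=\frac{(1+A)(3+A)}{48}$. Their moduli are $\frac{\cos\alpha}{12}\sqrt{4+5\cos^2\alpha}$ and $\frac{\cos\alpha}{12}\sqrt{1+3\cos^2\alpha}$.
\end{itemize}
So the stated bounds for $\gamma_2$ and $\gamma_3$ are valid but too large by factors $2$ and $4$. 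For $\alpha=0$, $f(z)=z/(1-z)$ has $\gamma_n=\frac{1}{2n}$, so the sharp values are $\frac14$ and $\frac16$, against the stated $\frac12$ and $\frac23$. Your claim that $c_n=2$ yields $\frac{\cos\alpha}{3}\sqrt{1+3\cos^2\alpha}$ is therefore false, and the sharpness assertion cannot be proved as written.

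The paper's proof has the same defect. Its \eqref{T-101} and \eqref{T-201} carry the factor $\frac{1+A}{24}$ where $\frac{1+A}{48}$ is correct (compare \eqref{T-91} and \eqref{T-92} with $B=-1$). Its final simplifications then introduce a further factor $2$. With the constants corrected, your Steps 1--2 are the paper's argument written in Carath\'eodory rather than Schwarz coefficients. Both routes end at $|\gamma_2|\le\frac{\cos\alpha}{12}\sqrt{4+5\cos^2\alpha}$ and $|\gamma_3|\le\frac{\cos\alpha}{12}\sqrt{1+3\cos^2\alpha}$, both sharp. The bound $|\gamma_1|\le\frac{\cos\alpha}{2}$ is correct as you have it.
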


%Recent work has focused on bounds for $\gamma_n$ in families like $\mathcal{S}^*(\varphi)$, $\mathcal{C}(\varphi)$, and subfamilies of $\mathcal{S}$ \cite{cho-Alimohammadi-2021}. Recently, Ponnusamy et al. \cite{Ponnusamy-Sharma- Wirths-2020} proved for $f \in \mathcal{S}^*(A,B)$.\\

%\textbf{Theorem A}\label{T-25} \cite{Ponnusamy-Sharma- Wirths-2020}
%Let $ f \in \mathcal{S}^*(A, B) $ for $-1 \leq B < A \leq 1$. Then the logarithmic coefficients of $ f $ satisfy the inequalities
%
%\begin{equation}
%\sum_{n=1}^{\infty} n^2 |\gamma_n|^2 \leq \frac{(A-B)^2}{4(1-B^2)}, \quad B \neq -1
%\end{equation}
%
%and
%
%$$
%\sum_{n=1}^{\infty} (n+1)^t |\gamma_n|^2 \leq \left(\frac{A-B}{2B}\right)^2 \sum_{n=1}^{\infty} \frac{(n+1)^t}{n^2} |B|^{2n} \quad \text{for } t \leq 2.\\
%$$   
%\vspace{1mm}

\section{Auxiliary Results}\label{T-section-002}
In this section, we list down some results which are known in the literature and are useful in the proof of our results.
Within the class $\mathcal{H}$, consider the subclass $\mathcal{B}$ consisting of functions $\omega$ satisfying $|\omega(z)| < 1$ for all $z \in \mathbb{D}$, and its subclass $\mathcal{B}_0$ where $\omega(0) = 0$. Any $\omega \in \mathcal{B}_0$ admits a Taylor series of the form
\begin{align}\label{T-01}
\omega(z)=\sum_{n=1}^\infty c_n z^n.
\end{align}
With the class $\mathcal{B}_0$, a closely related class is the class $\mathcal{P}$. It consists of analytic functions \(p(z)\) on the unit disk \(\mathbb{D}\) satisfying
\[
p(0) = 1, \qquad \operatorname{Re} p(z) > 0 \quad \text{for all } z \in \mathbb{D}.
\]
Such functions are called \emph{Carathéodory functions}. They appear naturally in the study of univalent functions.\\

In $1981$, Prokhorov et al. \cite{1981-Prokhorov-szynal} obtained a coefficient inequality for functions in $\mathcal{B}_0$.
\begin{lem}\label{T-34}\cite{1981-Prokhorov-szynal}
Let $f\in\mathcal{B}_0$ be of the form \eqref{T-01}. Then for real $\mu$ and $\nu$, we have
\begin{align*}
|c_3+\mu c_1c_2+\nu c_1^3|\le
\begin{cases} 
1 & \text{if } (\mu, \nu) \in D_1 \cup D_2 \cup \{ (2, 1) \} \\
|\nu| & \text{if } (\mu, \nu) \in \bigcup_{k=3}^{7} D_k \\
\frac{2}{3} \left( |\mu| + 1 \right) \left( \frac{|\mu| + 1}{3(|\mu| + 1 +\nu)} \right)^{1/2} & \text{if } (\mu, \nu) \in D_8 \cup D_9 \\
\frac{1}{3} \nu \left( \frac{\mu^2 - 4}{\mu^2 - 4\nu} \right) \left( \frac{\mu^2 - 4}{3(\nu - 1)} \right)^{1/2} & \text{if } (\mu, \nu) \in D_{10} \cup D_{11} - \{ (2, 1) \} \\
\frac{2}{3} \left( |\mu| - 1 \right) \left( \frac{|\mu| - 1}{3(|\mu| - 1 - \nu)} \right)^{1/2} & \text{if } (\mu, \nu) \in D_{12}.
\end{cases}
\end{align*}
where 
\allowdisplaybreaks
\begin{align*}
D_1 &= \left\{ (\mu, \nu): |\mu| \leq \frac{1}{2},\; -1 < \nu \leq 1 \right\}, \\
D_2 &= \left\{ (\mu, \nu): \frac{1}{2} \leq |\mu| \leq 2,\; \frac{4}{27} (|\mu| + 1)^3 - (|\mu| + 1) \leq \nu \leq 1 \right\}, \\
D_3 &= \left\{ (\mu, \nu): |\mu| \leq \frac{1}{2},\; \nu \leq -1 \right\}, \\
D_4 &= \left\{ (\mu, \nu): |\mu| \geq \frac{1}{2},\; \nu \leq -\frac{2}{3} (|\mu| + 1) \right\}, \\
D_5 &= \left\{ (\mu, \nu): |\mu| \leq 2,\; \nu \geq 1 \right\}, \\
D_6 &= \left\{ (\mu, \nu): 2 \leq |\mu| \leq 4,\; \nu \geq \frac{1}{12} (\mu^2 + 8) \right\}, \\
D_7 &= \left\{ (\mu, \nu): |\mu| \geq 4,\; \nu \geq \frac{2}{3} (|\mu| - 1) \right\}, \\
D_8 &= \left\{ (\mu, \nu): \frac{1}{2} \leq |\mu| \leq 2,\; -\frac{2}{3} (|\mu| + 1) \leq \nu \leq \frac{4}{27} (|\mu| + 1)^3 - (|\mu| + 1) \right\}, \\
D_9 &= \left\{ (\mu, \nu): |\mu| \geq 2,\; -\frac{2}{3} (|\mu| + 1) \leq \nu \leq \frac{2|\mu|(|\mu| + 1)}{\mu^2 + 2|\mu| + 4} \right\}, \\
D_{10} &= \left\{ (\mu, \nu): 2 \leq |\mu| \leq 4,\; \frac{2|\mu|(|\mu| + 1)}{\mu^2 + 2|\mu| + 4} \leq \nu \leq \frac{1}{12} (\mu^2 + 2) \right\}, \\
D_{11} &= \left\{ (\mu, \nu): |\mu| \geq 4,\; \frac{2|\mu|(|\mu| + 1)}{\mu^2 + 2|\mu| + 4} \leq \nu \leq \frac{2|\mu|(|\mu| - 1)}{\mu^2 - 2|\mu| + 4} \right\}, \\
D_{12} &= \left\{ (\mu, \nu): |\mu| \geq 4,\; \frac{2|\mu|(|\mu| - 1)}{\mu^2 - 2|\mu| + 4} \leq \nu \leq \frac{2}{3} (|\mu| - 1) \right\}.
\end{align*}

Moreover, all the inequalities are sharp.
\end{lem}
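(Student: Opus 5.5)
Since the functional $c_3+\mu c_1c_2+\nu c_1^3$ involves only the first three Taylor coefficients of $\omega\in\mathcal{B}_0$, I would reduce the problem to an explicit extremal problem in finitely many real variables. The tool is the Schur parametrization of the coefficient body of $\mathcal{B}_0$: for $\omega$ as in \eqref{T-01} there exist $a,b,c$ in the closed unit disk such that
\begin{align*}
c_1=a,\qquad c_2=(1-|a|^2)\,b,\qquad c_3=(1-|a|^2)(1-|b|^2)\,c-(1-|a|^2)\,\bar a\, b^2 .
\end{align*}
Conversely, every such triple is realized by some $\omega\in\mathcal{B}_0$; for instance one can take finite Blaschke products when $|a|=1$, $|b|=1$ or $|c|=1$. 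The rotation $\omega(z)\mapsto e^{-i\theta}\omega(e^{i\theta}z)$ sends $c_k\mapsto e^{i(k-1)\theta}c_k$. Since $\mu,\nu$ are real, the functional has pure homogeneity, so we may assume $a=x\in[0,1]$ is real. Writing $b=ye^{i\varphi}$ with $y\in[0,1]$, and using that $c$ enters linearly with coefficient of modulus $(1-x^2)(1-y^2)$, we get
\begin{align*}
\max|c_3+\mu c_1c_2+\nu c_1^3|=\max_{x,y,\varphi}\Big[(1-x^2)(1-y^2)+\big|\nu x^3+(1-x^2)\,x\,y\,(\mu e^{i\varphi}-y e^{2i\varphi})\big|\Big].
\end{align*}

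The next step is to maximize over $\varphi$ for fixed $x,y$. The modulus $|\nu x^3+\mu x(1-x^2)y\,e^{i\varphi}-x(1-x^2)y^2e^{2i\varphi}|^2$ is a quadratic polynomial in $t=\cos\varphi\in[-1,1]$. Its maximum sits either at an endpoint $t=\pm1$, or at the interior critical point of the parabola; which one occurs depends on the signs of $\nu$ and $\mu$ and on the size of $|\mu|$ relative to $y$. The endpoint case gives the real expression $\big|\nu x^3\pm|\mu|x(1-x^2)y-x(1-x^2)y^2\big|$ (after choosing signs). The interior case gives a closed form involving $\mu^2$ and $\nu$, which I expect to produce the regions $D_{10}$ and $D_{11}$ with the condition $\nu\ge\frac{2|\mu|(|\mu|+1)}{\mu^2+2|\mu|+4}$.

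I would then maximize the resulting function of $(x,y)\in[0,1]^2$. The boundary values are easy. At $x=0$ the value is $1$, which governs $D_1$ and $D_2$. At $x=1$ the value is $|\nu|$, which governs $D_3$ through $D_7$. Interior critical points in $y$ come from a quadratic in $y$, for example $y=\frac{|\mu|\pm\ldots}{2(1+x)}$-type expressions. Substituting the optimal $y$ leaves a one-variable function of $x$, and its maximum should produce the square-root expressions $\frac23(|\mu|\pm1)\big(\frac{|\mu|\pm1}{3(|\mu|\pm1\pm\nu)}\big)^{1/2}$ for $D_8\cup D_9$ and $D_{12}$. Finally I would verify sharpness case by case by exhibiting the extremal $(a,b,c)$, that is, explicit extremal functions $\omega$ such as $z^3$, $z$, or suitable degree-two and degree-three Blaschke products.

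The main obstacle is the bookkeeping of this final two-variable optimization: determining exactly where each candidate (boundary or interior, endpoint or interior $\varphi$) dominates, which is what carves out the twelve regions $D_k$. I would first treat $\nu\le0$ and $\nu\ge0$ separately, since the sign of $\nu$ decides whether the optimal $\varphi$ aligns or opposes the $\mu$-term. I would then compare the candidate values pairwise to locate the boundary curves, for example $\nu=\frac{4}{27}(|\mu|+1)^3-(|\mu|+1)$, where the interior value equals $1$.
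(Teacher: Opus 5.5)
The paper gives no proof of this lemma; it is quoted from Prokhorov--Szynal. Your reduction via the Schur--Carlson parametrization of $(c_1,c_2,c_3)$ is the standard way to attack it, and several pieces are right: the formulas for $c_1,c_2,c_3$, the elimination of $c$ (which produces the term $(1-x^2)(1-y^2)$), and the observation that for real $\mu,\nu$ the squared modulus is a quadratic in $\cos\varphi$.

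One step is wrong as written. The map $e^{-i\theta}\omega(e^{i\theta}z)$ leaves $c_1$ fixed and multiplies $c_3$, $c_1c_2$, $c_1^3$ by $e^{2i\theta}$, $e^{i\theta}$, $1$. So it can neither make $a$ real nor preserve the modulus of the functional. Use $\omega(z)\mapsto\omega(e^{i\theta}z)$ instead: it sends $c_k\mapsto e^{ik\theta}c_k$ and multiplies $c_3+\mu c_1c_2+\nu c_1^3$ by $e^{3i\theta}$, for arbitrary $\mu,\nu$.

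The real gap is that the proposal stops where the lemma begins. Proving the statement \emph{is} showing that the global maximum over $(x,y,\varphi)$ equals each of the five expressions exactly on the indicated regions, and none of this comparison is carried out. Moreover, part of what you expect is inaccurate. Set $P=x\bigl(\nu x^2-(1-x^2)y^2\bigr)$ and $Q=|\mu|x(1-x^2)y$. The better endpoint $\cos\varphi=\pm1$ gives $|P|+Q$. Which endpoint wins is decided by the sign of $P$, not of $\nu$: in $D_9$ the choice opposing the $\mu$-term wins even when $\nu>0$. The endpoint value is the maximum when $\nu\le0$. For $\nu>0$, the vertex $\cos\varphi=\mu\bigl(\nu x^2-(1-x^2)y^2\bigr)/(4\nu x^2y)$, when it lies in $[-1,1]$, gives
\[
x\bigl(\nu x^2+(1-x^2)y^2\bigr)\sqrt{1+\frac{\mu^2(1-x^2)}{4\nu x^2}}.
\]

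The square-root bounds for $D_8\cup D_9$ and $D_{12}$ do not come from interior critical points in $y$. They are attained on the edge $y=1$, with $\cos\varphi=-\operatorname{sgn}\mu$, resp.\ $+\operatorname{sgn}\mu$. One maximizes $\bigl|(|\mu|+1+\nu)x^3-(|\mu|+1)x\bigr|$, resp.\ $(|\mu|-1)x-(|\mu|-1-\nu)x^3$, over $x$; the extremals are $z(x\mp\operatorname{sgn}(\mu)z)/(1\mp\operatorname{sgn}(\mu)xz)$. The $D_{10}\cup D_{11}$ bound is the vertex value at $y=1$, maximized in $x$. Interior critical points in $y$ are competitors that must be shown never to win.

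The region boundaries are the loci where these maximizers hit constraints. For instance, $\nu=\frac{2|\mu|(|\mu|+1)}{\mu^2+2|\mu|+4}$ is where, at the $D_9$ maximizer, the vertex reaches $\cos\varphi=-\operatorname{sgn}\mu$. Likewise, the $D_{10}$ maximizer reaches $x=1$, where its value becomes $\nu$, exactly when $\nu=\frac{1}{12}(\mu^2+8)$. This also shows that the upper bound $\frac{1}{12}(\mu^2+2)$ printed for $D_{10}$ is a misprint for $\frac{1}{12}(\mu^2+8)$. As printed, $D_{10}$ is empty for $|\mu|$ near $2$ and leaves an uncovered band below $D_6$.

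Until this case analysis is done, you have a correct framework, not a proof.
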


 \begin{lem}\label{T-200}\cite{2016-Iason}
 Let $f\in\mathcal{B}_0$ be of the form \eqref{T-01}. Then for $\lambda \in \mathbb{C} $, we have
\begin{align*}
|c_2+\lambda c_1^2| &\le  \max\{1, |\lambda|\},\\
|c_3+(1+\lambda) c_1c_2+\lambda c_1^3| &\le \max\{1, |\lambda|\}
\end{align*}
and the inequalities are sharp.
 \end{lem}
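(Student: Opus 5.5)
The first inequality reduces to a classical fact, and the second to a parametrization of the first three coefficients. Write $\omega(z)=z\,\omega_1(z)$ with $\omega_1\in\mathcal{B}$ by the Schwarz lemma, so that $\omega_1(z)=c_1+c_2z+c_3z^2+\cdots$. The classical estimate $|c_2|\le 1-|c_1|^2$ gives
$$
|c_2+\lambda c_1^2|\le 1-|c_1|^2+|\lambda||c_1|^2 .
$$
The right-hand side is a convex combination of $1$ and $|\lambda|$, with weights $1-|c_1|^2$ and $|c_1|^2$. It is therefore at most $\max\{1,|\lambda|\}$. Sharpness comes from $\omega(z)=z^2$, which gives the value $1$, and from $\omega(z)=z$, which gives $|\lambda|$.

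For the second inequality I would use the Schur parametrization of the first three coefficients of a function in $\mathcal{B}_0$:
$$
c_1=a,\quad c_2=(1-|a|^2)b,\quad c_3=(1-|a|^2)\bigl((1-|b|^2)c-\bar a b^2\bigr),\qquad |a|,|b|,|c|\le 1 .
$$
Then I would split the functional into a part independent of $\lambda$ and a part linear in $\lambda$:
$$
c_3+(1+\lambda)c_1c_2+\lambda c_1^3=(c_3+c_1c_2)+\lambda c_1(c_2+c_1^2).
$$
Substituting the parametrization and taking the modulus term by term gives a bound depending only on $|a|$, $|b|$ and $|\lambda|$. The free parameter $c$ enters only linearly, through $(1-|a|^2)(1-|b|^2)c$, so it contributes at most $(1-|a|^2)(1-|b|^2)$. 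The aim is to show the resulting bound is again dominated by a convex combination of $1$ and $|\lambda|$, with weight roughly $|a|^2$ on $|\lambda|$ as in the first part. The analogy is that the coefficient of $\lambda$, namely $c_1(c_2+c_1^2)$, has size at most $|a|\bigl((1-|a|^2)|b|+|a|^2\bigr)$.

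The main obstacle is this maximization. A naive triangle inequality on $-\bar a b^2+ab$ loses too much, and it does so precisely when $|a|$ is close to $1$ and $|\lambda|>1$. There I would try two things. First, regroup so that the combination $b(a-\bar a b)$ appears and is bounded by $|b|\,|a-\bar a b|\le |b|$ where that helps. Second, apply the first inequality of the lemma to the function $\omega_1$ itself (after Schwarz–Pick normalization) to control $c_2+\lambda c_1^2$-type pieces. If the direct estimate still fails, I would fall back on reducing to $\lambda$ real and nonnegative by a rotation $\omega(z)\mapsto e^{-i\theta}\omega(e^{i\theta}z)$ where possible, and maximize in the two real variables $|a|,|b|$.

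Sharpness for the second inequality comes from $\omega(z)=z^3$, which gives $1$, and from $\omega(z)=z$, which gives $c_3+(1+\lambda)c_1c_2+\lambda c_1^3=\lambda$.
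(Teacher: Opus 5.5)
Your proof of the first inequality is correct: the Schwarz--Pick bound $|c_2|\le 1-|c_1|^2$ plus the convex-combination remark does it. All four sharpness examples are also correct. The paper gives no proof of this lemma and only cites it, so your argument has to stand on its own.

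For the second inequality, however, you stop at the decisive step, and the fixes you list do not work. The bound $|a-\bar a b|\le 1$ is false. Take $a=ir$, $b=1$, i.e. $\omega(z)=z(ir+z)/(1-irz)$. Then $|a-\bar a b|=2r$ and $|c_3+c_1c_2|=2r(1-r^2)$, which exceeds $1-|c_1|^2$ once $r>1/2$. In particular, no estimate that puts weight $|c_1|^2$ on $|\lambda|$ can succeed, since at $\lambda=0$ it would force $|c_3+c_1c_2|\le 1-|c_1|^2$.

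The rotation reduction is also unavailable. The map $\omega\mapsto\epsilon\,\omega(\eta z)$ turns the functional into $\epsilon\eta^3\bigl(c_3+\epsilon(1+\lambda)c_1c_2+\epsilon^2\lambda c_1^3\bigr)$. This has the required shape with a new parameter only if $(1-\epsilon)(1-\epsilon\lambda)=0$, so $\arg\lambda$ cannot be normalized. As it stands, the proposal proves only the easy half of the lemma.

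Your splitting $(c_3+c_1c_2)+\lambda c_1(c_2+c_1^2)$ is the right one. What is missing is the correct bound on the $\lambda$-free part and the correct weight. Set $m_1=c_1$, $m_2=c_2+c_1^2$, $m_3=c_3+2c_1c_2+c_1^3$; these are the first coefficients of $\omega/(1-\omega)$. Apply the Herglotz representation to $(1+\omega)/(1-\omega)=1+2\omega/(1-\omega)$. It gives $m_n=\int_{\partial\mathbb{D}}\zeta^n\,d\mu(\zeta)$ for a probability measure $\mu$. Hence $c_3+c_1c_2=m_3-m_1m_2=\int(\zeta-m_1)(\zeta^2-m_2)\,d\mu$, and $\int|\zeta^k-m_k|^2\,d\mu=1-|m_k|^2$. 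Cauchy--Schwarz and AM--GM then give
\[
|c_3+c_1c_2|\le\sqrt{(1-|m_1|^2)(1-|m_2|^2)}\le 1-s,\qquad |c_1(c_2+c_1^2)|=|m_1m_2|\le s,
\]
with $s=\frac12(|m_1|^2+|m_2|^2)\in[0,1]$. So the functional is at most $(1-s)+s|\lambda|\le\max\{1,|\lambda|\}$.

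This is the same convex-combination mechanism as in your first part, where $c_2+\lambda c_1^2=(m_2-m_1^2)+\lambda m_1^2$ and $|m_2-m_1^2|\le 1-|m_1|^2$. The only change is the weight $s$ instead of $|c_1|^2$. In Carath\'eodory language this is the Livingston-type bound $|p_3-wp_1p_2|\le 2\max\{1,|1-2w|\}$ for $p=(1+\omega)/(1-\omega)$ with $\lambda=1-2w$.
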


%\begin{lem}\label{T-23}\cite{Rogosinski-1943}
%  Let $ f(z) = \sum_{n=0}^{\infty} a_n z^n $ and $ g(z) = \sum_{n=0}^{\infty} b_n z^n $ be analytic in $\mathbb{D}$, and $ f \prec g $. Then for each natural number $n\in\mathbb{N}$,
%$$
%%$$  
%\end{lem}

\begin{lem}\label{T-24}\cite{sharma-keong-ali-2024} Let $ f $ be in $ \mathcal{S}^*(\varphi)$, where $\varphi(z)=1+\sum_{n=1}^{\infty}B_nz^n $. Then the logarithmic coefficients $\gamma_n$ of $f$ satisfy the following inequalities
\begin{align*}
\sum_{n=1}^{\infty} n^2 |\gamma_n|^2 &\leq \frac{1}{4} \sum_{n=1}^{\infty} |B_n|^2,\\
\sum_{n=1}^{\infty} (n+1)^t |\gamma_n|^2 &\leq \frac{1}{4} \sum_{n=1}^{\infty} \frac{(n+1)^t}{n^2} |B_n|^2, \quad t \le 2.
\end{align*}
The first inequality is sharp for the function $ f $ given by  $ zf'(z)/f(z)=\varphi(z) $.
\end{lem}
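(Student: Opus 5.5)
The plan is to reduce the lemma to a statement about subordinate power series and then apply Rogosinski's theorem. Since $f\in\mathcal{S}^*(\varphi)$, there is $\omega\in\mathcal{B}_0$ with $zf'(z)/f(z)=\varphi(\omega(z))$. Differentiating \eqref{T-71} gives
$$
z F_f'(z)=\frac{zf'(z)}{f(z)}-1=\sum_{n=1}^\infty 2n\gamma_n z^n .
$$
Hence
$$
g(z):=\sum_{n=1}^\infty 2n\gamma_n z^n=\varphi(\omega(z))-1\prec \varphi(z)-1=\sum_{n=1}^\infty B_n z^n=:h(z).
$$
The subordination here is just composition with a Schwarz function, so no univalence of $\varphi$ is needed.

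\textbf{First inequality.} I will invoke Rogosinski's theorem: if $g\prec h$ with $g(z)=\sum b_nz^n$ and $h(z)=\sum a_nz^n$, then
$$
\sum_{k=1}^N |b_k|^2\le \sum_{k=1}^N |a_k|^2 \quad\text{for every } N.
$$
Alternatively, for the first inequality alone, Littlewood's subordination principle gives
$$
\int_0^{2\pi}|g(re^{i\theta})|^2\,d\theta\le \int_0^{2\pi}|h(re^{i\theta})|^2\,d\theta .
$$
Either way we obtain $\sum_{n=1}^N 4n^2|\gamma_n|^2\le \sum_{n=1}^N |B_n|^2$ for all $N$. Letting $N\to\infty$ yields $\sum n^2|\gamma_n|^2\le \frac14\sum|B_n|^2$. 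If the right-hand side diverges there is nothing to prove.

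\textbf{Sharpness.} Take $\omega(z)=z$, i.e.\ $zf'(z)/f(z)=\varphi(z)$. Then $2n\gamma_n=B_n$ for all $n$, and equality holds.

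\textbf{Second inequality.} Put $w_n=(n+1)^t/n^2$, $a_n=4n^2|\gamma_n|^2$ and $b_n=|B_n|^2$. The key observation is that $w_n$ is nonincreasing and positive when $t\le 2$:
\begin{itemize}
\item for $0\le t\le2$, write $w_n=\bigl(\tfrac{n+1}{n}\bigr)^t n^{t-2}$, a product of two nonincreasing positive factors;
\item for $t<0$, $w_n=(n+1)^t\cdot n^{-2}$ is again a product of two decreasing positive factors.
\end{itemize}
With partial sums $S_N=\sum_{n\le N}a_n\le T_N=\sum_{n\le N}b_n$, Abel summation gives
$$
\sum_{n=1}^N w_na_n=\sum_{n=1}^{N-1}(w_n-w_{n+1})S_n+w_NS_N\le \sum_{n=1}^{N-1}(w_n-w_{n+1})T_n+w_NT_N=\sum_{n=1}^N w_nb_n .
$$
Letting $N\to\infty$ gives $\sum (n+1)^t|\gamma_n|^2\le \frac14\sum \frac{(n+1)^t}{n^2}|B_n|^2$.

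The main point requiring care is that the \emph{partial-sum} form of Rogosinski's inequality, and not merely the full-series Littlewood bound, is what the Abel summation needs. I would cite it in that form, which holds for arbitrary analytic $h$ under subordination by a Schwarz function. The monotonicity of $w_n$ is routine.
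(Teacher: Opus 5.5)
Your proof is correct and is the standard argument behind this lemma, which the paper only cites without proof. You apply Rogosinski's partial-sum inequality to $zf'(z)/f(z)-1=\varphi(\omega(z))-1\prec\varphi(z)-1$ and then use Abel summation against the nonincreasing weights $(n+1)^t/n^2$, $t\le 2$. Because the argument uses only that $\omega\in\mathcal{B}_0$, it also confirms the paper's Remark that no univalence, starlikeness or positivity hypothesis on $\varphi$ is needed.

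One small inaccuracy: Littlewood's principle applied to $\varphi-1$ itself gives only the full-series bound, not the partial-sum bound for every $N$. To get partial sums you must apply it to the truncations of $\varphi-1$, which is exactly Rogosinski's proof. Your main route does not depend on the Littlewood alternative, so this does not affect the result.
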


\begin{rem}
The authors in \cite{sharma-keong-ali-2024} proved Lemma \ref{T-24} under the additional hypothesis that $\varphi$ is univalent and starlike with respect to $\varphi(0) = 1$, has a positive real part in $\mathbb{D}$, and $\varphi'(0) > 0$. However, after close inspection, we observed that these conditions are not necessary (see \cite[Theorems 2.2 and 2.3]{sharma-keong-ali-2024}).
\end{rem}

%\begin{lem}\label{T-27}\cite[Th.2.4f.]{miller-mocanu-2000} Let $f, \lambda \in \mathcal{A}$ and $\operatorname{Re(\lambda(z))>0}$ in $\mathbb{D}$, also we have 
%$$
%\operatorname{Re}\left(f(z)+z\lambda(z)f'(z) \right) > 0 \text{ for } z \in \mathbb{D}.
%$$
%Then $\operatorname{Re}f(z)>0.$
%\end{lem}
%\begin{lem}\label{T-33}\cite[Theorem 3.4(a), p.120]{miller-mocanu-2000}
%Let $h$ be analytic in $\mathbb{D}$ and let $\varphi$ be analytic in a domain containing $h(\mathbb{D})$ and suppose
%\begin{enumerate}
 %   \item[(i)] $\operatorname{Re}\left(\varphi (h(z))\right) > 0$, and either
  %  \item[(ii)] $h$ is convex, or
   % \item[(iii)] $H(z) = \varphi (h(z)) \cdot zh'(z)$ is starlike.
%\end{enumerate}
%If $p$ is analytic in $\mathbb{D}$, with $p(0) = h(0)$, $p(\mathbb{D}) \subset \mathbb{D}$ and
%$$
%p(z) + \varphi [p(z)] \cdot zp'(z) \prec h(z),
%$$
%then $p(z) \prec h(z)$.
%\end{lem}

\begin{lem}\label{T-28}\cite[Th.3.2i.]{miller-mocanu-2000} Let $ h $ be convex in $ \mathbb{D} $, with $ h(0) = 1 $. If $ q $ is the analytic solution of
$$
q(z) + \frac{zq'(z)}{q(z)} = h(z), \quad q(0) = 1
$$
and if $ \operatorname{Re} q(z) > 0 $, then $ q $ is univalent. If $ p \in \mathcal{H}$ with $p(0)=1$ satisfies
$$
p(z) + \frac{zp'(z)}{p(z)} \prec h(z),
$$
then $ p \prec q $ and $ q $ is the best dominant. 

%Moreover, if $ F \in \mathcal{A}$, then
%$$
%\frac{zF''(z)}{F'(z)} + 1 \prec h(z) \implies \frac{zF'(z)}{F(z)} \prec %q(z),
%$$
%and this result is sharp.
\end{lem}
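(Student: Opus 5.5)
The plan is to follow the Miller–Mocanu approach to Briot–Bouquet differential subordinations. It has three parts: construct $q$ explicitly, prove that $q$ is univalent, and then prove $p\prec q$ with the boundary-point lemma for subordination. For the construction, define $F\in\mathcal{A}$ by
$$
F'(z)=\exp\left(\int_0^z\frac{h(t)-1}{t}\,dt\right),
$$
so that $1+zF''(z)/F'(z)=h(z)$ and $F\in\mathcal{C}(h)$. Put $q(z)=zF'(z)/F(z)$. Differentiating $\log q=\log z+\log F'-\log F$ gives
$$
\frac{zq'}{q}=1+\frac{zF''}{F'}-\frac{zF'}{F}=h-q.
$$
Hence $q$ is the analytic solution with $q(0)=1$, and it is unique by comparing coefficients.

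For univalence, set $Q(z)=zq'(z)/q(z)=h(z)-q(z)$. I will show that $Q$ is starlike and that $\operatorname{Re}\bigl(zh'(z)/Q(z)\bigr)>0$; together with $\operatorname{Re}q>0$, this gives univalence of $q$ by the standard criterion of Miller–Mocanu (Theorem 3.4h). Differentiating $q+Q=h$ gives
$$
\frac{zh'}{Q}=q+\frac{zQ'}{Q}.
$$
So it suffices to show $\operatorname{Re}(zQ'/Q)>0$, because $\operatorname{Re}q>0$ then gives $\operatorname{Re}(zh'/Q)>0$ as well. I expect this starlikeness of $Q$ to be the main obstacle. It has to use the convexity of $h$, through $\operatorname{Re}(1+zh''/h')>0$, in an argument by contradiction at a first point $z_0$ where $\operatorname{Re}(zQ'/Q)$ vanishes. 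I would first attempt Jack's lemma applied to the function $\omega$ defined by $zQ'/Q=(1-\omega)/(1+\omega)$, combined with the identity $zh'/Q=q+zQ'/Q$. If that fails, I would instead invoke the known result that Briot–Bouquet solutions with convex right-hand side have starlike $Q$.

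For the subordination, replace $q$ by $q_r(z)=q(rz)$ and $h$ by $h_r(z)=h(rz)$, so that everything is analytic and univalent on $\overline{\mathbb{D}}$, and let $r\to1^-$ at the end. Suppose $p\not\prec q_r$. By the Miller–Mocanu boundary lemma there are $z_0\in\mathbb{D}$, $\zeta_0\in\partial\mathbb{D}$ and $m\ge1$ with $p(z_0)=q_r(\zeta_0)$ and $z_0p'(z_0)=m\zeta_0q_r'(\zeta_0)$. Then
$$
p(z_0)+\frac{z_0p'(z_0)}{p(z_0)}=h_r(\zeta_0)+(m-1)Q_r(\zeta_0).
$$
Because $\operatorname{Re}\bigl(\zeta_0h_r'(\zeta_0)/Q_r(\zeta_0)\bigr)>0$, the vector $Q_r(\zeta_0)$ makes an acute angle with the outward normal $\zeta_0h_r'(\zeta_0)$ of the convex domain $h_r(\mathbb{D})$ at $h_r(\zeta_0)$. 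Hence the right-hand side lies outside $h_r(\mathbb{D})$, which contradicts the hypothesis. Therefore $p\prec q$.

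Finally, $q$ itself satisfies the subordination with equality, so $q$ is a dominant attained by an admissible $p$. Any other dominant $\tilde q$ must then satisfy $q\prec\tilde q$, and this shows that $q$ is the best dominant.
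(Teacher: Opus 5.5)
\textbf{Genuine gap: the univalence of $q$ is never proved.} The paper gives no proof of this lemma; it is quoted from Miller--Mocanu's monograph, so your proposal has to stand on its own. Your construction $q=zF'/F$, the uniqueness of the formal solution, and the best-dominant remark are fine. The central step, however, is missing. You reduce univalence of $q$ to starlikeness of $Q=zq'/q$, call that the main obstacle, and then only say you would try Jack's lemma or else ``invoke the known result''.

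As sketched, the Jack route does not close. With $P=zQ'/Q$, differentiating $zh'=Q(q+P)$ gives $1+zh''/h'=P+(qQ+zP')/(q+P)$. At a point where $P=i\rho$, the term $qQ/(q+i\rho)$ has no sign control, so convexity of $h$ gives no contradiction. Also, Miller--Mocanu's Theorem 3.4h \emph{assumes} $q$ univalent; it is not a univalence criterion.

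Both halves of the lemma rest on this missing estimate. Your boundary lemma needs $q$ univalent, and your geometric step needs $\operatorname{Re}(zh'/Q)>0$.

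\textbf{How to fill it (no starlikeness of $Q$ needed).} Put $v=Q/(zh')$. It is analytic in $\mathbb{D}$ with $v(0)=1/2$. From $zh'=zq'+zQ'=qQ+zQ'$ one gets directly
\[
1-zv'(z)=v(z)\Bigl(q(z)+1+\frac{zh''(z)}{h'(z)}\Bigr)=:v(z)W(z),\qquad \operatorname{Re}W>0 .
\]
Suppose $\operatorname{Re}v$ first vanishes at $z_0$. Then $v(z_0)=i\tau$ and $z_0v'(z_0)=-m(\tau^2+\tfrac14)\le-\tfrac14$. The left side is therefore real and at least $5/4$. The right side $i\tau W(z_0)$ is either $0$ or has nonzero imaginary part $\tau\operatorname{Re}W(z_0)$, a contradiction. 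Hence $\operatorname{Re}(zh'/Q)>0$.

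Next, $G=\log q$ (principal branch, available since $\operatorname{Re}q>0$) satisfies $G'/h'=v$. So $G$ is close-to-convex, hence univalent (Noshiro--Warschawski on the convex set $h(\mathbb{D})$). Then $q=e^{G}$ is univalent because $|\operatorname{Im}G|<\pi/2$.

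\textbf{Second error: $p$ is not rescaled.} In the subordination step you replace $q$ and $h$ by $q_r$ and $h_r$ but leave $p$ unchanged. The point $h_r(\zeta_0)+(m-1)Q_r(\zeta_0)$ lying outside $h_r(\mathbb{D})$ then contradicts nothing, because the hypothesis only places $p+zp'/p$ in the larger set $h(\mathbb{D})$. In fact, the argument as written would prove $p\prec q_r$ for every $r<1$, which forces $p(\mathbb{D})\subset\bigcap_r q(\mathbb{D}_r)=\{1\}$.

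The fix is to use $p_r(z)=p(rz)$. By Schwarz's lemma, $p_r+zp_r'/p_r\prec h_r$. Your boundary argument, with $q_r\in\mathcal{Q}$ now justified by the univalence above, then gives $p_r\prec q_r$. Letting $r\to1^-$ yields $p(\mathbb{D})\subset q(\mathbb{D})$, hence $p\prec q$; treat $p\equiv1$ separately. With these two repairs your outline becomes a complete proof.
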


\begin{lem}\label{T-29}\cite{miller-mocanu-1985} Let $\beta$ and $\gamma$ be complex numbers with $\beta \neq 0$, and let $h(z) = a + h_1 z + \cdots$ be regular in $\mathbb{D}$. If $\operatorname{Re}[\beta h(z) + \gamma] > 0$ in $\mathbb{D}$ then the solution of
$$
q(z) + \frac{z q'(z)}{\beta q(z)+\gamma} = h(z),\quad q(0) = a
$$
is analytic in $\mathbb{D}$. Further, the solution satisfies $\operatorname{Re}[\beta q(z) + \gamma] > 0$ and is given by
$$
q(z) =
\begin{cases}
H'(z) \left( \beta \int_{0}^{z} H''(t) t^{-1} dt \right)^{-1} - \gamma/\beta & \text{if } a = 0, \\[2mm]
z^{\gamma} [H(z)]^{\beta a} \left( \beta \int_{0}^{z} [H(t)]^{\beta a} t^{\gamma - 1} dt \right)^{-1} - \gamma/\beta & \text{if } a \neq 0,
\end{cases}
$$
where
$$
H(z) =
\begin{cases}
z \exp \left( \frac{\beta}{\gamma} \int_{0}^{z} \frac{h(t)}{t} dt \right) & \text{if } a = 0, \\[2mm]
z \exp \left( \int_{0}^{z} \frac{h(t) - a}{at} dt \right) & \text{if } a \neq 0.
\end{cases}
$$
\end{lem}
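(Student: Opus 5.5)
The plan is to reduce the Briot--Bouquet equation to a first-order linear equation by an explicit change of unknown. This gives the closed-form solution. Analyticity and the positivity $\operatorname{Re}[\beta q+\gamma]>0$ then follow from a Jack/Miller--Mocanu boundary-point argument.

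\emph{Step 1 (linearization).} Put $p=\beta q+\gamma$, so $p(0)=\beta a+\gamma$ and $\operatorname{Re}p(0)>0$. The equation $q+zq'/(\beta q+\gamma)=h$ becomes
$$p(z)+\frac{zp'(z)}{p(z)}=\beta h(z)+\gamma=:k(z),\qquad \operatorname{Re}k>0 .$$
Writing formally $p=zG'/G$, the left-hand side equals $1+zG''/G'$. The equation therefore reads $(\log G')'=(k(z)-1)/z$. Integrating with the splitting $k-1=(\beta a+\gamma-1)+\beta(h-a)$ gives, for $a\neq0$,
$$G'(z)=z^{\beta a+\gamma-1}\exp\Big(\beta\int_0^z\frac{h(t)-a}{t}\,dt\Big)=z^{\gamma-1}[H(z)]^{\beta a},$$
with $H$ as in the statement. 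Then $G(z)=\int_0^z [H(t)]^{\beta a}t^{\gamma-1}\,dt$. Substituting into $q=(zG'/G-\gamma)/\beta$ yields exactly the stated formula. The case $a=0$ is handled the same way: there $p(0)=\gamma$, the normalization of $H$ changes to $H=z\exp(\frac{\beta}{\gamma}\int_0^z h(t)t^{-1}dt)$, and the displayed integral representation follows by the same computation (equivalently, with $H'$ in place of $G'$ up to the factor $\beta$). Conversely, one checks directly that this $q$ satisfies the equation with $q(0)=a$ wherever it is analytic. The powers $z^{\gamma-1}$ are harmless: $G(z)=z^{\beta a+\gamma}\,\Phi(z)$ with $\Phi$ analytic and $\Phi(0)=1/(\beta a+\gamma)\neq0$ (this uses $\operatorname{Re}(\beta a+\gamma)>0$). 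Hence $zG'/G$ is an analytic single-valued function near $0$ with value $\beta a+\gamma$.

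\emph{Step 2 (analyticity and positivity).} Let $r_0$ be the largest radius such that $p=zG'/G$ is analytic in $|z|<r_0$. The only possible singularities are zeros of $\Phi$. I will show $\operatorname{Re}p>0$ on $|z|<r_0$. Suppose not. Then there is a first point $z_0$ (on some circle $|z|=r<r_0$) where $\operatorname{Re}p(z_0)=0$, say $p(z_0)=ix$, while $\operatorname{Re}p>0$ in $|z|<|z_0|$. Since $p$ is analytic up to $z_0$ and $p(z_0)\neq0$ (as $\operatorname{Re}p(0)>0$ and the data are continuous; the $x=0$ subcase is handled via the Jack lemma applied to $1/p$ or by perturbing $r$), the Miller--Mocanu lemma gives $z_0p'(z_0)=m$ real with $m\le -\tfrac12|p(0)|^{-2}\ldots(1+x^2)$, up to the standard normalization by $\operatorname{Re}p(0)$. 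Consequently $\operatorname{Re}\big(p(z_0)+z_0p'(z_0)/p(z_0)\big)=\operatorname{Re}(m/(ix))=0$ for $x\neq0$. More precisely, the refined form of the lemma gives a value $\le0$, contradicting $\operatorname{Re}k>0$.

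\emph{Step 3 (excluding blow-up).} Now I show $r_0=1$. Near a zero of $\Phi$ of order $m$, $p$ has a simple pole with residue-type behaviour $p\sim m z_0/(z-z_0)$. Such a pole forces $\operatorname{Re}p$ to take negative values in every punctured neighbourhood of $z_0$, contradicting Step 2 on $|z|<r_0$. Hence $\Phi$ has no zeros in $\mathbb{D}$, so $p$, and hence $q$, is analytic in $\mathbb{D}$ with $\operatorname{Re}[\beta q+\gamma]>0$.

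I expect the main obstacle to be this continuation step (Steps 2--3) together with the precise boundary-point inequality for $p$ normalized at $p(0)=\beta a+\gamma$, which is not $1$. The remedy is to apply the lemma to the normalized function $(p-i\operatorname{Im}p(0))/\operatorname{Re}p(0)\in\mathcal{P}$ locally. The constant factor and shift must then be tracked so that the sign of $\operatorname{Re}(zp'/p)$ at $z_0$ is preserved.
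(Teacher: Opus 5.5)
The paper does not prove Lemma~\ref{T-29}; it quotes it from Miller--Mocanu, so there is no in-paper argument to compare against. Your route is the standard proof and is sound. You set $p=\beta q+\gamma=zG'/G$, which turns the equation into $1+zG''/G'=k:=\beta h+\gamma$. You write $G=z^{\lambda}\Phi$ with $\lambda=\beta a+\gamma$, $\Phi$ analytic and $\Phi(0)=1/\lambda$, so $p=\lambda+z\Phi'/\Phi$ is meromorphic in $\mathbb{D}$. A first-boundary-point argument gives $\operatorname{Re}p>0$ up to the first pole. Finally, the residue $mz_0$ at a zero of $\Phi$ forces $p(tz_0)\to-\infty$ as $t\to1^-$, which excludes the pole. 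For $a\neq0$ your computation reproduces the stated formula exactly. Three points need repair.

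\textbf{The case $a=0$ is a real error in your write-up.} You claim the displayed formula ``follows by the same computation''; it does not, and $H'$ is not $G'$ up to a factor $\beta$. With $H(z)=z\exp\bigl(\frac{\beta}{\gamma}\int_0^z h(t)t^{-1}dt\bigr)$, your computation actually gives $zG'(z)=[H(z)]^{\gamma}$ and $G(z)=\int_0^z[H(t)]^{\gamma}t^{-1}dt$. Hence
\[
q(z)=[H(z)]^{\gamma}\Bigl(\beta\int_0^z[H(t)]^{\gamma}t^{-1}\,dt\Bigr)^{-1}-\frac{\gamma}{\beta},
\]
which is the Miller--Mocanu formula. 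The printed $H'(z)$ and $H''(t)$ are misprints for $[H(z)]^{\gamma}$ and $[H(t)]^{\gamma}$. The printed version is not even meaningful in general. Since $H''(0)=2\beta h_1/\gamma$, the integral $\int_0^z H''(t)t^{-1}dt$ diverges whenever $h_1\neq0$. For $h\equiv0$ the printed expression is $1/0$, whereas the true solution is $q\equiv0$. You should state and prove the corrected formula rather than assert agreement with the misprint.

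\textbf{Two smaller repairs.} Your reason for $p(z_0)\neq0$ (``$\operatorname{Re}p(0)>0$ and the data are continuous'') proves nothing, and the suggested fallbacks are unnecessary. The right reason is that $p=\Phi_0/\Phi$ with $\Phi_0(z)=\exp\bigl(\beta\int_0^z(h(t)-a)t^{-1}dt\bigr)$ zero-free, so $p$ never vanishes.

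You also do not need the quantitative Miller--Mocanu inequality or any renormalization of $p(0)$, so the ``main obstacle'' you anticipate is absent. Since $\operatorname{Re}p\ge0$ on $|z|=|z_0|$ with equality at $z_0$, the tangential derivative $-\operatorname{Im}(z_0p'(z_0))$ vanishes. So $z_0p'(z_0)$ is real, and $p(z_0)+z_0p'(z_0)/p(z_0)$ is purely imaginary, contradicting $\operatorname{Re}k(z_0)>0$.

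\textbf{Two points to make explicit.} In Step~3, say that the contradiction comes from approaching $z_0$ radially and from the sign of the residue $mz_0$. A pole with residue a negative multiple of $z_0$, as for $(1+z)/(1-z)$ at $z_0=1$, is compatible with positivity in the disk. Also, ``the solution'' presupposes uniqueness. This follows from the coefficient recursion $(n+\lambda)p_n=(\text{expression in }p_0,\dots,p_{n-1}\text{ and the }k_j)$, together with $n+\lambda\neq0$ because $\operatorname{Re}\lambda>0$.
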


The \emph{Gaussian hypergeometric function} \( F(a,b;c;z)= {{}_{2}F_{1}}(a,b;c;z)\) is defined in \(\mathbb{D}\) by the power series expansion
\[
F(a,b;c;z)={{}_{2}F_{1}}(a,b;c;z) = \sum_{n=0}^{\infty} \frac{(a)_n (b)_n}{(c)_n \, n!} \, z^{n},
\]
where \((a)_n\) denotes the \emph{Pochhammer symbol}, i.e., \((a)_0 = 1\) and
\[
(a)_n = a(a+1) \dotsm (a+n-1), \quad n = 1,2,\dots.
\]
Here $a, b,$ and $c$ are complex numbers with \(c \notin -\mathbb{N}_0 := \{0,-1,-2,\dotsc\}\).
By definition, we have \(F(a,b;c;z) = F(b,a;c;z)\).
For further properties of the hypergeometric function, we refer to the handbook-1965-stegun-1965-stegun \cite{handbook-1965-stegun-1965-stegun}. Recently, Sugawa \cite{sugawa-wang-2024} obtained specific conditions on the parameters $a, b, c$ for which  the ratio of two hypergeometric functions is not a convex function. 

\begin{lem}\cite{sugawa-wang-2024}\label{T-43}
    Let $F(a, b; c; z)$ be the hypergeometric function with parameters $a,b,c$ such that $a + b - 1 < c < a + b + 1/2$ and $(c - a)(c - b) > 0$. Then the function
$F(a + 1, b; c; z)/F(a, b; c; z)$ is not convex in $\mathbb{D}$. 
\end{lem}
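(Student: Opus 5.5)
The plan is to argue by contradiction through the boundary behaviour of
$$
g(z)=\frac{F(a+1,b;c;z)}{F(a,b;c;z)}
$$
near the singular point $z=1$. If $g$ were convex (univalent) in $\mathbb{D}$, then
$$
\operatorname{Re}\Big(1+\frac{zg''(z)}{g'(z)}\Big)>0 \quad\text{throughout } \mathbb{D}.
$$
In particular, its radial limits on $|z|=1$ near $z=1$ would have nonnegative real part. I will produce an asymptotic expansion of $1+zg''/g'$ as $z\to 1$ along the circle $z=e^{i\theta}$. The aim is to show that its real part is eventually negative on one side of $\theta=0$, which matches the huge negative numbers in Table 1.

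The first step is an algebraic simplification. The contiguous relations for $F$ show that $g$ satisfies a Riccati equation of the form
$$
z(1-z)g'(z)=\alpha_0+\alpha_1 g(z)+\alpha_2 g(z)^2,
$$
with coefficients explicit in $a,b,c$; for instance $a\,(F(a+1)-F(a))=zF'(a)$ combined with Gauss' relations. Hence $1+zg''/g'$ can be written rationally in terms of $z$ and $g$, and only the asymptotics of $g$ itself near $z=1$ are needed.

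The second step uses the connection formulas at $z=1$. Write $\sigma=c-a-b\in(-1,1/2)$. The formulas express $F(a,b;c;z)$ and $F(a+1,b;c;z)$ through $F(\cdot;1-z)$ and $(1-z)^{\sigma}F(\cdot;1-z)$, respectively $(1-z)^{\sigma-1}F(\cdot;1-z)$. Keeping the two leading terms of each gives an expansion of the form
$$
g(z)=\frac{1}{1-z}\Big(\kappa_0+\kappa_1(1-z)^{\tau}+O\big(|1-z|^{\tau'}\big)\Big),
$$
where $\tau$ is a non-integer exponent built from $\sigma$ and $\tau'>\tau$; if $\sigma<0$, one factors out $(1-z)^{\sigma}$ from the denominator first. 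The hypothesis $(c-a)(c-b)>0$ enters exactly here. It guarantees that the Gamma-factor coefficients $\Gamma(c)\Gamma(\sigma)/(\Gamma(c-a)\Gamma(c-b))$ and their analogues are finite and nonzero with a definite sign, so $\kappa_0\ne0$ and $\kappa_1\ne0$ and $\kappa_1/\kappa_0$ is real.

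The third step substitutes this expansion. The leading term $\kappa_0/(1-z)$ gives
$$
1+\frac{zg''}{g'}=\frac{1+z}{1-z}+\text{(correction)},
$$
whose first part is purely imaginary on $|z|=1$. Hence the sign of the real part is decided by the correction. This correction is $C(1-z)^{\tau-1}(1+o(1))$ with $C$ real and nonzero. The condition $c<a+b+1/2$, together with $c>a+b-1$, should ensure that $\tau-1<0$, so the correction blows up. It should also ensure that this correction dominates the remaining error terms.

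On $z=e^{i\theta}$ with $\theta\to0^{\pm}$ we have $\arg(1-z)\to\mp\pi/2$. Then the real part of the correction is asymptotically
$$
C\,|\theta|^{\tau-1}\cos\big((\tau-1)\pi/2\big),
$$
with the sign of the cosine possibly depending on the side when other branches occur. Choosing the side appropriately, and using that $\tau$ is not an odd integer so the cosine is nonzero, yields negative real part. This contradicts convexity. By continuity, negativity at boundary points passes to nearby interior points $re^{i\theta}$.

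The main obstacle is the bookkeeping in steps two and three. One must identify which exponent governs the correction in each subcase, separating $\sigma\in(0,1/2)$ from $\sigma\in(-1,0]$, with the integer case $\sigma=0$ involving logarithms. One must then check that its coefficient's sign and the phase $\cos((\tau-1)\pi/2)$ combine to give a negative value on some side. I would first work out the case $\sigma\in(0,1/2)$ explicitly, where $F(a,b;c;1)$ is finite and $g\sim K(1-z)^{\sigma-1}$. I would then treat $\sigma\le0$ by the Euler transformation
$$
F(a,b;c;z)=(1-z)^{\sigma}F(c-a,c-b;c;z),
$$
which reduces the singular behaviour to the previous pattern.
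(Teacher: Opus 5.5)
The paper does not prove this lemma; it is quoted from Sugawa--Wang. So your proposal has to be judged on its own. The boundary-asymptotics strategy is viable, but there is a genuine gap at the decisive step, the sign of the correction term.

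Put $\sigma=c-a-b$ and $t=1-z$. Carrying out your expansion gives, for $0<\sigma<\tfrac12$,
\[
1+\frac{zg''(z)}{g'(z)}=1+\frac{(2-\sigma)z}{1-z}+\frac{\Gamma(-\sigma)\Gamma(c-a)\Gamma(c-b)}{\Gamma(\sigma)\Gamma(a)\Gamma(b)}\cdot\frac{\sigma(1-2\sigma)}{1-\sigma}\,t^{\sigma-1}+o\bigl(|t|^{\sigma-1}\bigr),
\]
and, for $-1<\sigma<0$ with $\tau=-\sigma$,
\[
1+\frac{zg''(z)}{g'(z)}=\frac{1+z}{1-z}+\frac{\Gamma(\sigma)\Gamma(a)\Gamma(b)}{\Gamma(-\sigma)\Gamma(c-a)\Gamma(c-b)}\,\tau(1-\tau)\,t^{\tau-1}+o\bigl(|t|^{\tau-1}\bigr).
\]
On $z=e^{i\theta}$ one has $\operatorname{Re}t^{s}\sim|\theta|^{s}\cos(s\pi/2)>0$ for $s\in(-1,0)$. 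This holds on \emph{both} sides of $\theta=0$, so choosing a side rescues nothing. Since $\Gamma(-\sigma)/\Gamma(\sigma)<0$, the real part tends to $-\infty$ precisely when $\Gamma(a)\Gamma(b)\Gamma(c-a)\Gamma(c-b)>0$, and to $+\infty$ when this product is negative.

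The hypothesis $(c-a)(c-b)>0$ does not control that product. It says nothing about $\Gamma(a)\Gamma(b)$, and it does not fix the sign of $\Gamma(c-a)\Gamma(c-b)$ once $c-a$ or $c-b$ lies below $-1$. In fact, with only the two displayed conditions the statement is false. Take $a=b=-1$, $c=-\tfrac52$: both conditions hold, yet
\[
g(z)=\frac{F(0,-1;-\tfrac52;z)}{F(-1,-1;-\tfrac52;z)}=\frac{1}{1-\tfrac25 z},
\]
which maps $\mathbb{D}$ onto a disk and is therefore convex.

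An extra hypothesis such as $a,b>0$ is needed. With it, $c>a+b-1$ gives $c-a,\,c-b>-1$, so $(c-a)(c-b)>0$ forces $\Gamma(c-a)\Gamma(c-b)>0$. That is the real role of this condition, and your argument then goes through. This covers the paper's use in Proposition~\ref{T-prop-001}: there $(a,b,c)=(c_0,1,2)$ with $c_0\in(\tfrac12,2)$, and all four Gamma arguments are positive.

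The bookkeeping also needs fixing in a few places.
\begin{itemize}
\item Your ansatz $g=(1-z)^{-1}\bigl(\kappa_0+\kappa_1(1-z)^{\tau}+\cdots\bigr)$ with $\kappa_0\neq0$ is valid only for $\sigma\le0$ (up to a logarithm at $\sigma=0$). For $0<\sigma<\tfrac12$ one has $g\sim K(1-z)^{\sigma-1}$, as you note later.
\item In that case the leading part of $1+zg''/g'$ is not purely imaginary on the circle; its real part is $\sigma/2$. The relevant correction comes from the term $A_2(1-z)^{\sigma}$ in the connection formula for $F(a,b;c;z)$.
\item The condition $c<a+b+\tfrac12$ is not what makes the correction blow up; that is automatic. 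It enters only through the factor $1-2\sigma$ in the first display, which changes sign at $\sigma=\tfrac12$.
\item For $\sigma=0$ the correction is $-z/\bigl((1-z)\log\frac{1}{1-z}\bigr)$. Its real part is $\sim-\pi/(2|\theta|\log^2|\theta|)$ whatever the parameters, so that case is fine.
\end{itemize}

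Finally, Table 1 does not corroborate this mechanism. It concerns $c_0\in(0,\tfrac12]$, i.e. $\sigma=1-c_0\ge\tfrac12$, where $1-2\sigma\le0$. For $\sigma>\tfrac12$ your own expansion gives $\operatorname{Re}(1+zg''/g')\to+\infty$ near $z=1$, not large negative values.
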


\section{Proof of the main results}\label{T-section-003}

\begin{proof}[\textbf{Proof of Proposition \ref{T-prop-001}}]
The function $f_c(z)$ defined by \eqref{T-fun-001} can be written
%$\begin{cases}
%\displaystyle \frac{(1 - z)^{1-c} - 1}{c - 1}, &~~~c\neq1,\\
%\displaystyle -\log(1-z), &~~~c=1
%\end{cases
in terms of the Gauss hypergeometric function as
\begin{align*}
f_c(z) = z \, F(c, 1, 2; z).
\end{align*}
%\begin{align*}
%g(z) &= \frac{1}{(c-1)}\sum_{n=1}^{\infty} \frac{(c-1)_n}{n!} z^n \\
%&= z \sum_{n=0}^{\infty} \frac{(c)_n}{(n+1)!} z^n \\
%&= z \, {}_2F_1(c, 1, 2; z)
%\end{align*}
%Therefore, $F_1(z)=z \, {}_2F_1(c, 1, 2; z)$. 
Using this representation, the function $\psi(z)$ defined by \eqref{T-fun-002} can be expressed as
\begin{align*}
\psi(z)
&=\frac{zf_c'(z)}{f_c(z)}
=\frac{z(z \, F(c,1,2,z))'}{z \, F(c,1,2,z)}
= 1 + \frac{z \, F'(c,1,2,z)}{F(c,1,2,z)} \\
&= 1 - c + c\frac{F(c+1,1,2,z)}{F(c,1,2,z)}.
\end{align*}
By Lemma \ref{T-43}, the ratio 
$$\frac{F(c+1,1,2,z)}{F(c,1,2,z)}$$
is not convex for $c\in (1/2,2)$, and consequently, the function  $\psi(z)$ is also not convex for these values of $c$.
\end{proof}

\begin{proof}[\textbf{Proof of Theorem \ref{T-thm-001}}]
Suppose $ f \in \mathcal{F}(c) $ where $c\in(0,2]$. Then 
$$
1 + \frac{zf''(z)}{f'(z)} \prec 1 + \frac{cz}{1-z}=:\varphi(z).
$$
%Since $ c \in (0, 2]  $, the function $ \varphi $ has positive real part in $ \mathbb{D} $ because
%$$
%\inf \left\{ \operatorname{Re} \varphi(z) : z \in \mathbb{D} \right\} = \inf \left\{ 1 + c \operatorname{Re} \frac{z}{1 - z} : z \in \mathbb{D} \right\} = 1 - \frac{c}{2} \geq 0.
%$$
If we take $ g(z) = zf'(z)/f(z) $, which is a non-vanishing analytic function in $\mathbb{D}$ with $ g(0) = 1 $, then from the aforementioned subordination relation, we get
\begin{align}\label{T-31}
g(z) + \frac{zg'(z)}{g(z)} \prec 1 + \frac{cz}{1-z}.
\end{align}
Next, assume that $\psi$ satisfies the differential equation
\begin{align}\label{T-32}
\psi(z) + \frac{z\psi'(z)}{\psi(z)} = \varphi(z). 
\end{align}
Since $\varphi(z)\in \mathcal{P}$, by Lemma \ref{T-29}, the solution $\psi$ that satisfies $\operatorname{Re} \psi(z) > 0$ in $\mathbb{D}$ is given by \eqref{T-fun-002}. From relations \eqref{T-31} and \eqref{T-32}, together with Lemma \ref{T-28}, we find that $\psi$ is univalent for $c \in (0, 2]$ in $\mathbb{D}$ and
\begin{align}\label{T-36}
    g(z) \prec \psi(z) \quad \text{i.e.}, \quad \frac{zf'(z)}{f(z)} \prec \psi(z),  \quad \text{i.e.} \quad \mathcal{F}(c) \subset \mathcal{S}^*(\psi), 
\end{align}
where $\psi$ is the best dominant. Since $\psi(z) = 1 + \sum_{n=1}^{\infty} D_n z^n$, it follows from \eqref{T-36} and Lemma \ref{T-24} that
$$
\sum_{n=1}^{\infty} n^2 |\gamma_n|^2 \leq \frac{1}{4} \sum_{n=1}^{\infty} |D_n|^2
$$
and
$$
\sum_{n=1}^{\infty} (n+1)^t |\gamma_n|^2 \leq \frac{1}{4} \sum_{n=1}^{\infty} \frac{(n+1)^t}{n^2} |D_n|^2, \quad \text{for } t \leq 2.
$$
The first inequality is sharp for the function $f_c(z)$ defined by \eqref{T-fun-001}, because
$$
z \left( \log \left( \frac{f_c(z)}{z} \right) \right)'= \frac{z f_c'(z)}{f_c(z)} - 1 = \psi(z)-1
$$
which implies
$$
\sum_{n=1}^{\infty} 2n \gamma_n(f_c) z^n 
%= \psi(z) - 1 = \frac{c}{2} z + \left( \frac{c^2}{12} + \frac{c}{3} \right) z^2 + \left( \frac{c^2}{8} + \frac{c}{4} \right) z^3 + \cdots 
= \sum_{n=1}^{\infty} D_n z^n.
$$
This completes the proof.
\end{proof}

Before we prove our next result, we note that if $f(z)$ is of the form \eqref{T-05} and the logarithmic coefficients $\gamma_n$ of $f(z)$ are given by \eqref{T-71}, then by differentiating \eqref{T-71} and equating the coefficients of $z, z^2,$ and $z^3$, one can obtain
\begin{align}\label{T-15}
\gamma_1=\frac{1}{2}a_2,\quad \gamma_2=\frac{1}{2}\left(a_3-\frac{1}{2}a_2^2\right),\quad \gamma_3=\frac{1}{2}\left(a_4-a_2a_3+\frac{1}{3}a_2^3\right).
\end{align}

\begin{pf}[\textbf{Proof of Theorem \ref{T-thm-20}}]
Let $ f \in \mathcal{C}(A,B) $ be of the form \eqref{T-05} . Then
$$
1 + \frac{zf''(z)}{f'(z)} \prec \frac{1+Az}{1+Bz}.
$$  
Thus, there exist an $\omega\in\mathcal{B}_0$ of the form \eqref{T-01} such that
$$
1 + \frac{zf''(z)}{f'(z)} = \frac{1+A \omega(z)}{1+B \omega(z)}.
$$
Now equating the coefficients of $z^2,z^3$ and $z^4$, and simplifying, we obtain
    \begin{align*}
    a_2&=\frac{c_1(A-B)}{2}, \quad a_3=\frac{A-B}{6}\left((A-2B)c_1^2+c_2\right),\\
    a_4&= \frac{A-B}{24}\left(2c_3+(3A-7B)c_{1}c_{2}+(A^2-5AB+6B^2)c_1^3\right).
    \end{align*}
Substituting these values into \eqref{T-15}, we obtain
\begin{align}
    \gamma_1&=\frac{c_1(A-B)}{4}, \quad \gamma_2=\frac{A-B}{48}\left((A-5B)c_1^2+4c_2\right),\label{T-91}\\
    \gamma_3&= \frac{A-B}{24}\left(c_3+\frac{1}{2}(A-5B)c_{1}c_{2}+\frac{1}{2}(3B^2-AB)c_1^3\right).\label{T-92}
    \end{align}
Hence from \eqref{T-91}, we find
    $$
     |\gamma_1| \leq \frac{A - B}{4}
    $$
    and the inequality is sharp for the function $g_1\in\mathcal{C}(A,B)$ defined by
\begin{align}\label{T-98}
1 + \frac{zg_1''(z)}{g_1'(z)} = \frac{1+Az}{1+B z}.
\end{align}
Furthermore, from \eqref{T-91} and Lemma \ref{T-200}, we get
\begin{align*}
|\gamma_2|
&=\frac{A-B}{12}\left|c_2+\frac{A-5B}{4}c_1^2\right|\\
&\le \frac{A-B}{12}\max\left\{1,\frac{|A-5B|}{4}\right\}\\
& =\begin{cases}\frac{A-B}{12} & ~~\text{for}~ |A-5B|\le 4\\[2mm]
\frac{A-B}{48}|A-5B| & ~~\text{for}~ |A-5B|>4
 \end{cases}
\end{align*}
The first inequality is sharp for the function $g_2\in\mathcal{C}(A,B)$ defined by
\begin{align}\label{T-93}
1 + \frac{zg_2''(z)}{g_2'(z)} = \frac{1+Az^2}{1+Bz^2}.
\end{align}
For the function $g_2\in\mathcal{C}(A,B)$, it is easy to see that
$$
a_2=0,\quad a_3=\frac{A-B}{6}\quad\text{and}\quad\gamma_2(g_2)=\frac{1}{2}\left(a_3-\frac{1}{2}a_2^2\right)=\frac{A-B}{12}.
$$
The second inequality is sharp for the function $g_1\in\mathcal{C}(A,B)$ defined by \eqref{T-98}. For the function $g_1\in\mathcal{C}(A,B)$, it is easy to see that
$$
a_2=\frac{A-B}{2},\quad a_3=\frac{A^2-3AB+2B^2}{6}
$$
and so 
$$\gamma_2(g_1)=\frac{1}{2}\left(a_3-\frac{1}{2}a_2^2\right)=\frac{A-B}{48}(A-5B).$$

Further, from \eqref{T-92}, we get
\begin{align}\label{T-905}
|\gamma_{3}|
&=\frac{A-B}{24}\left|c_3+\frac{1}{2}(A-5B)c_{1}c_{2}+\frac{1}{2}(3B^2-AB)c_1^3\right|\\
&= \frac{A-B}{24}\left|c_3+\mu c_{1}c_{2}+\nu c_1^3\right|\nonumber,
\end{align}
with \(\mu = \frac{1}{2}(A - 5B)\) and \(\nu = \frac{1}{2}(3B^2 - AB)\). We now use Lemma \ref{T-34} to find the estimate of $\gamma_3$. For \(-1 \leq B < A \leq 1\), one can quickly verify that 
\begin{equation}\label{T-900}
\mu \in (-2, 3]\quad \text{and}\quad \nu \in \left[-\frac{1}{24}, 2\right].
\end{equation}

Suppose that the sets $D_1,D_2,\ldots,D_{12}$ are defined as in Lemma \ref{T-34}.
\begin{itemize}
\item Since \(|\mu| \le 4\), it immediately follows that the sets \(D_7\), \(D_{11}\), and \(D_{12}\) are empty. Similarly, $D_3$ is empty because $\nu \not\le -1$.

\item If $|\mu|\ge \frac{1}{2}$ then
$$-\frac{2}{3}(|\mu| + 1)\le -1 $$
and so the condition $\nu \leq -\frac{2}{3}(|\mu| + 1)$ is incompatible with the admissible range of \(\nu\) given in \eqref{T-900}. Hence, the set $D_4$ is empty.

\item When $2\le |\mu|\le 4$ we must have $\mu\in(2,3]$. In this case,
$$
\frac{2|\mu|(|\mu| + 1)}{\mu^2 + 2|\mu| + 4}> \frac{2(\mu^2 + \mu)}{9 + 6 + 4}> \frac{\mu^2 + 2}{12}.
$$
This shows that \(D_{10}\) is also empty.

\item The conditions for $D_5$ ($|\mu| \le 2$ and $\nu \ge 1$) is equivalent to 
\[
\frac{A-4}{5} \leq B \leq \frac{A - \sqrt{A^2 + 24}}{6}, \quad |A| < 1.
\]
However, for any \(|A| < 1\), we have \(\frac{A-4}{5} > \frac{A - \sqrt{A^2 + 24}}{6}\). Hence, the set \(D_5\) is also empty.

\end{itemize}

On the other hand, it is easy to check that 
\begin{itemize}
\item when \((A,B)=(0,-0.1)\) then the associated $(\mu,\nu)\in D_1$,
\item when \((A,B)=(-0.5,-0.6)\) then the associated $(\mu,\nu)\in D_2$,
\item when \((A,B)=(0,-0.9)\) then the associated $(\mu,\nu)\in D_6$,
\item when \((A,B)=(0,-0.795)\) then the associated $(\mu,\nu)\in D_8$,
\item when \((A,B)=(0,-0.81)\) then the associated $(\mu,\nu)\in D_9$.
\end{itemize}
This shows that the sets $D_1, D_2, D_6, D_8$ and $D_9$ are non empty and together they cover all possible values of $(A,B)$ satisfying \(-1 \leq B < A \leq 1\).
Now we consider the following cases:\\

\textbf{Case-1:}  Let $(\mu,\nu)\in D_1\cup D_2$. Then from \eqref{T-905} and by Lemma \ref{T-34}, we have
$$
|\gamma_{3}|\le\frac{A-B}{24}.
$$
The inequality is sharp for the function $g_3\in\mathcal{C}(A,B)$ defined by
$$
1 + \frac{zg_3''(z)}{g_3'(z)} = \frac{1+Az^3}{1+Bz^3}.\\
$$
For the function $g_3\in\mathcal{C}(A,B)$, it is easy to see that $a_2=0,~~ a_3=0,~~ a_4=\frac{A-B}{12}$ and so
$$
\gamma_3(g_3)=\frac{1}{2}\left(a_4-a_2a_3+\frac{1}{3}a_2^3\right)=\frac{A-B}{24}.
$$

\textbf{Case-2:} Let $(\mu,\nu)\in D_6$. Then from \eqref{T-905} and by Lemma \ref{T-34}, we have
$$
|\gamma_{3}|\le\frac{A-B}{48}|B(A-3B)|.
$$
The inequality is sharp for the function $g_1\in\mathcal{C}(A,B)$ defined by defined by \eqref{T-98}. For the function $g_1\in\mathcal{C}(A,B)$, it is easy to see that
$$
a_2=\frac{A-B}{2},~~ a_3=\frac{A^2-3AB+2B^2}{6}, ~~a_4=\frac{A^3-6A^2B+11AB^2-6B^3}{24}
$$
and so 
$$\gamma_3(g_1)=\frac{1}{2}\left(a_4-a_2a_3+\frac{1}{3}a_2^3\right)=-\frac{A-B}{48}B(A-3B).$$

\textbf{Case-3:} Let $(\mu,\nu)\in D_8\cup D_9$. Then from \eqref{T-905} and by Lemma \ref{T-34}, we have
$$
|\gamma_3|
\le \frac{A-B}{24}\cdot \frac{2\left( |\mu| + 1 \right)}{3}  \left( \frac{|\mu| + 1}{3(|\mu| + 1 +\nu)} \right)^{1/2}
= \frac{A-B}{72\sqrt{3}} \cdot \frac{\left( |A - 5B| + 2 \right)^{3/2}}{\sqrt{|A - 5B| + 2 + 3B^{2} - AB}}.
$$
The inequality is sharp for the function $g_4\in\mathcal{C}(A,B)$ defined by
$$
1 + \frac{zg_4''(z)}{g_4'(z)} = \frac{1+A\omega(z)}{1+B\omega(z)}
$$
where
\begin{align*}
\omega(z)
&=\frac{z(c-\operatorname{sgn} (\mu)~ z)}{(1-\operatorname{sgn} (\mu)~cz)}, \quad c=\left( \frac{|\mu|+1}{3(|\mu|+\nu+1)}\right)^{1/2}\\[3mm]
&= cz+(c^2-1)\operatorname{sgn} (\mu) z^2+c(c^2-1)z^3+\cdots.
\end{align*}
Here we note that if $(\mu,\nu)\in D_8\cup D_9$ then
$$
3(|\mu|+\nu+1)\ge 3(|\mu|+1)-\frac{2}{3}(|\mu|+1)> |\mu|+1>0
$$
and consequently $|c|<1$. For the function $g_4\in\mathcal{C}(A,B)$, from \eqref{T-905}, we have
%\begin{align*}
%a_2 &= \frac{c(A-B)}{2}, ~~a_3 = \frac{A - B}{6} \left( (A - 2B + 1) c^2 - 1 \right), \\[4pt]
%a_4 &= \frac{A - B}{24} \, c \Bigl( (-3A + 7B - 2) + \bigl( A^2 - 5AB + 3A + 6B^2 - 7B + 2 \bigr) c^2 \Bigr),
%\end{align*}
\begin{align*}
|\gamma_3(g_4)|
&= \frac{A-B}{24}\left|c_3+\mu c_{1}c_{2}+\nu c_1^3\right|\\
&=\frac{A-B}{24}\left|c(c^2-1)+\mu c(c^2-1)\operatorname{sgn} (\mu) +\nu c^3\right|\\
&=\frac{c(A-B)}{24}\left|c^2(1+|\mu|+\nu)-(|\mu|+1)\right|\\
&=\frac{c(A-B)}{24}\left|\frac{1}{3}(|\mu|+1)-(|\mu|+1)\right|\\
&=\frac{c(A-B)}{36}(|\mu|+1)\\
&= \frac{A-B}{72\sqrt{3}} \cdot \frac{\left( |A - 5B| + 2 \right)^{3/2}}{\sqrt{|A - 5B| + 2 + 3B^{2} - AB}}.
\end{align*}

\end{pf}

\begin{proof}[\textbf{Proof of Theorem \ref{T-thm-10}}]
If $ f \in \mathcal{C}(A,B) $, then
$$
1 + \frac{zf''(z)}{f'(z)} \prec \frac{1+Az}{1+Bz}.=:\varphi_1(z).
$$ 
Set $ g(z) = zf'(z)/f(z) $, which is a non-vanishing analytic function in $\mathbb{D}$ with $ g(0) = 1 $. Then
\begin{align}\label{T-38}
    g(z) + \frac{zg'(z)}{g(z)} =1 + \frac{zf''(z)}{f'(z)} \prec \varphi_1(z). 
\end{align}
Assume that $\psi_1$ satisfies the differential equation
\begin{align}\label{T-39}
\psi_1(z) + \frac{z\psi_1'(z)}{\psi_1(z)} = \varphi_1(z). 
\end{align}
Since $-1\leq B< A \leq 1$, the function $ \varphi_1(z)$ has a positive real part in $ \mathbb{D} $.
By Lemma \ref{T-29}, the solution $\psi_1$ satisfying $\operatorname{Re} \psi_1 >0$ in $\mathbb{D}$ is given by $\psi_1=\psi_{A,B}$, where $\psi_{A,B}$ is defined in \eqref{T-70}. Furthermore, using Lemma \ref{T-28}, we find that $\psi_1$ is univalent in $\mathbb{D}$. From relations \eqref{T-38} and \eqref{T-39}, and Lemma \ref{T-28}, we have
\begin{align}\label{T-40}
g(z) \prec \psi_1(z) \quad \text{i.e.} \quad \frac{zf'(z)}{f(z)} \prec \psi_1(z) \quad \text{i.e.}~~~ ~\mathcal{C}(A,B) \subset \mathcal{S}^*(\psi_1). 
\end{align}
Since $\psi_1(z)=\psi_{A,B} = 1 + \sum_{n=1}^{\infty} E_n z^n$, it follows from \eqref{T-40} and Lemma \ref{T-24} that
$$
\sum_{n=1}^{\infty} n^2 |\gamma_n|^2 \leq \frac{1}{4} \sum_{n=1}^{\infty} |E_n|^2
$$
and 
$$
\sum_{n=1}^{\infty} (n+1)^t |\gamma_n|^2 \leq \frac{1}{4} \sum_{n=1}^{\infty} \frac{(n+1)^t}{n^2} |E_n|^2 \quad \text{for } t \leq 2.
$$
The first inequality is sharp for the function $K_{A,B}(z)$ given by \eqref{T-fun-10}, because
$$
z \left( \log \left( \frac{K_{A,B}(z)}{z} \right) \right)'=\frac{z K'_{A,B}(z)}{K_{A,B}(z)} - 1 =\psi_{A,B}-1
$$
which leads to
$$
\sum_{n=1}^{\infty} 2n \gamma_n(K_{A,B}) z^n =\sum_{n=1}^{\infty} E_n z^n.
$$
This completes the proof.
\end{proof}

\begin{pf}[\textbf{Proof of Theorem \ref{T-thm-30}}]
If $ f \in \mathcal{S}_\alpha $ then
$$
1 + \frac{zf''(z)}{f'(z)} \prec \frac{1+e^{-2i\alpha }z}{1-z}=:\varphi_2(z) .
$$  
Set $ g(z) = zf'(z)/f(z) $, which is a non-vanishing analytic function in $\mathbb{D}$ with $ g(0) = 1 $. Then, from the aforementioned subordination, we have
\begin{align}\label{T-112}
    g(z) + \frac{zg'(z)}{g(z)}=1 + \frac{zf''(z)}{f'(z)} \prec \varphi_2(z). 
\end{align}
Assume that $\psi_2$ satisfies the differential equation
\begin{align}\label{T-113}
\psi_2(z) + \frac{z\psi_2'(z)}{\psi_2(z)} =\varphi_2(z).
\end{align}
Since $ \alpha \in (-\pi/2,~ \pi/2)  $, the function $ \varphi_2 $ has a positive real part in $ \mathbb{D} $. By Lemma \ref{T-29}, the solution $\psi_2$ that satisfies $\operatorname{Re} \psi_2 >0$ in $\mathbb{D}$ is given by  
\begin{equation*}
\psi_2(z) =  \frac{Az}{(1-z)(1-(1-z)^{A})} , ~~~A=e^{-2i\alpha}.
\end{equation*}
Furthermore, using Lemma \ref{T-28}, we get $\psi_2$ is univalent in $\mathbb{D}$. From relations \eqref{T-112}  and \eqref{T-113} along with Lemma \ref{T-28}, we have
\begin{align}\label{T-114}
g(z) \prec \psi_2(z) \quad \text{i.e.} \quad \frac{zf'(z)}{f(z)} \prec \psi_2(z) \quad \text{i.e.} ~~~\mathcal{S}_\alpha \subset \mathcal{S}^*(\psi_2), 
\end{align}
Since $\psi_2(z) = 1 + \sum_{n=1}^{\infty} F_n z^n$, it follows from \eqref{T-114} and Lemma \ref{T-24} that
$$
\sum_{n=1}^{\infty} n^2 |\gamma_n|^2 \leq \frac{1}{4} \sum_{n=1}^{\infty} |F_n|^2
$$
and
$$
\sum_{n=1}^{\infty} (n+1)^t |\gamma_n|^2 \leq \frac{1}{4} \sum_{n=1}^{\infty} \frac{(n+1)^t}{n^2} |F_n|^2 \quad \text{for } t \leq 2.
$$

%\begin{align}\label{T-114}
%f \in  \mathcal{S}_\alpha \implies f \in \mathcal{S}^*(\psi_2) \quad \text{i.e.} \quad \mathcal{S}_\alpha \subset \mathcal{S}^*(\psi_2). 
%\end{align}
%Then from the definition of $ \mathcal{S}^*(\psi_2) $, it follows that

%\begin{align}\label{T-115}
%z\frac{d}{dz}\left(\log\left(\frac{f(z)}{z}\right)\right) = \frac{zf'(z)}{f(z)} - 1 \prec \psi_2(z) - 1.
%\end{align} 

The inequality is sharp for the function $h_1(z) = \left[(1-z)^{-A}-1 \right]  \in\mathcal{S}_\alpha$, because
$$
z \left ( \log \left ( \frac{h_1(z)}{z} \right) \right)'=\frac{z h_1'(z)}{h_1(z)} - 1 = \psi_2(z) - 1
$$
yielding
$$
\sum_{n=1}^{\infty} 2n \gamma_n(h_1) z^n = \sum_{n=1}^{\infty} F_n z^n.
$$
This completes the proof.
\end{pf}

\begin{pf}[\textbf{Proof of Theorem \ref{T-thm-40}}]
Let $ f \in \mathcal{S}_\alpha $ be of the form \eqref{T-05} . Then
$$
1 + \frac{zf''(z)}{f'(z)} \prec \frac{1+e^{-2i\alpha }z}{1-z} \quad \text{for } z \in \mathbb{D}.
$$  
Thus, there exists $\omega\in\mathcal{B}_0$ of the form \eqref{T-01} such that
$$
1 + \frac{zf''(z)}{f'(z)} =\frac{1+e^{-2i\alpha }\omega(z)}{1-\omega(z)} 
$$  
For simplicity, we write $A=e^{-2i\alpha }$. Equating the coefficients of $z^2,z^3$ and $z^4$ we have
    \begin{align*}
    a_2=&\frac{c_1(1+A)}{2}, \quad a_3=\frac{(1+A)}{6}\left((2+A)c_1^2+c_2\right),\\
    a_4= &\frac{(1+A)}{24}\left(2c_3+(7+3A)c_{1}c_{2}+(6+5A+A^2)c_1^3\right).
    \end{align*}
By \eqref{T-15}, we obtain
\begin{align}
    \gamma_1=&\frac{c_1(1+A)}{4}, \quad \gamma_2=\frac{(1+A)}{24}\left((5+A)c_1^2+4c_2\right),\label{T-101}\\
    \gamma_3= &\frac{(1+A)}{24}\left(2c_3+(5+A)c_{1}c_{2}+(3+A)c_1^3\right).\label{T-201}
    \end{align}
     Hence from \eqref{T-101}, we get
    $$
     |\gamma_1| \leq \frac{|1+A|}{4}= \frac{\cos\alpha}{2}
    $$
    and the inequality is sharp for the function $h_2\in\mathcal{S}_\alpha$ defined by
\begin{align}\label{T-102}
1 + \frac{zh_2''(z)}{h_2'(z)} =  \frac{1+Az}{1-z}.
\end{align}
Again, from \eqref{T-101} and Lemma \ref{T-200}, we get
\begin{align*}
|\gamma_2|
&= \frac{|1+A|}{6} \left|c_2+ \frac{5+A}{4}c_1^2 \right| \le \frac{|1+A|}{6} \max \left\{1, \frac{|5+A|}{4}\right\}\\
& =\frac{\cos\alpha}{3}\sqrt{4+5\cos^2\alpha}.
\end{align*}
The inequality is sharp for the function $h_2\in\mathcal{S}_\alpha$ defined by \eqref{T-102}.\\

Further, from \eqref{T-201} and Lemma \ref{T-200}, we get
\begin{align*}\label{T-104}
|\gamma_{3}|
&=\frac{|1+A|}{12} \left|c_3+\frac{5+A}{2}c_{1}c_{2}+\frac{3+A}{2}c_1^3\right|\\[2mm]
& \le \frac{|1+A|}{12} \max \left\{1, \frac{|3+A|}{2}\right\}\\[2mm]
&= \frac{\cos\alpha}{3}\sqrt{1+3\cos^2\alpha}
\end{align*}
The inequality is sharp for the function $h_2\in\mathcal{S}_\alpha$ defined by \eqref{T-102}.

\end{pf}

%%%%%%%%%%%%%%%%%%%%%%%%%%%%%%%%%%%%%%%%%%%%%%
\vspace{4mm}

\noindent\textbf{Data availability:}
Data sharing is not applicable to this article as no data sets were generated or analyzed during the current study.\vspace{2mm}

\noindent\textbf{Authors Contributions:}
All authors contributed equally to the investigation of the problem and the order of the authors is given alphabetically according to the surname. All authors read and approved the final manuscript.\vspace{2mm}

\noindent\textbf{Acknowledgement:}
The second author thanks the CSIR for the financial support through CSIR-SRF Fellowship ( File no. : 09/0973(13731)/2022-EMR-I ).

\end{document}